\newtheorem{obs} [subsection]{Remark}
\newtheorem{exm} [subsection]{Example}
\newtheorem{prop}[subsection]{Proposition}
\newtheorem{teor}[subsection]{Theorem}
\newtheorem{lema}[subsection]{Lemma}
\newtheorem{cor} [subsection]{Corollary}
\newcommand{\Zng}{$\mathbb Z^n$-graded $S$-module}
\def\sdepth{\operatorname{sdepth}}
\def\depth{\operatorname{depth}}
\def\supp{\operatorname{supp}}
\def\lcm{\operatorname{lcm}}
\begin{document}
\selectlanguage{english}
\frenchspacing

\large
\begin{center}
\textbf{Several inequalities regarding sdepth}

Mircea Cimpoea\c s
\end{center}
\normalsize

\begin{abstract}
We give several bounds for $\sdepth_S(I+J)$, $\sdepth_S(I\cap J)$, $\sdepth_S(S/(I+J))$,
$\sdepth_S(S/(I\cap J))$, $\sdepth_S(I:J)$ and $\sdepth_S(S/(I:J))$ where $I,J\subset S=K[x_1,\ldots,x_n]$ are monomial ideals. Also, we give several equivalent forms of Stanley Conjecture for $I$ and $S/I$, where $I\subset S$ is a monomial ideal.

\noindent \textbf{Keywords:} Stanley depth, Stanley conjecture, monomial ideal.

\noindent \textbf{2000 Mathematics Subject
Classification:}Primary: 13P10.
\end{abstract}

\section*{Introduction}

Let $K$ be a field and $S=K[x_1,\ldots,x_n]$ the polynomial ring over $K$.
Let $M$ be a \Zng. A \emph{Stanley decomposition} of $M$ is a direct sum $\mathcal D: M = \bigoplus_{i=1}^rm_i K[Z_i]$ as $K$-vector space, where $m_i\in M$, $Z_i\subset\{x_1,\ldots,x_n\}$ such that $m_i K[Z_i]$ is a free $K[Z_i]$-module. We define $\sdepth(\mathcal D)=\min_{i=1}^r |Z_i|$ and $\sdepth_S(M)=\max\{\sdepth(\mathcal D)|\;\mathcal D$ is a Stanley decomposition of $M\}$. The number $\sdepth(M)$ is called the \emph{Stanley depth} of $M$. Herzog, Vladoiu and Zheng show in \cite{hvz} that this invariant can be computed in a finite number of steps if $M=I/J$, where $J\subset I\subset S$ are monomial ideals. There are two important particular cases. If $I\subset S$ is a monomial ideal, we are interested in computing $\sdepth_S(S/I)$ and $\sdepth_S(I)$ and to find some relation between them. 

Let $I\subset S'=K[x_1,\ldots,x_r]$, $J\subset S''=K[x_{r+1},\ldots,x_n]$ two monomial ideals, and consider $S=K[x_1,\ldots,x_n]$. In Theorem $1.2$, we give some lower and upper bounds for $\sdepth_S(IS+JS)$ and $\sdepth_S(S/(IS\cap JS))$. Some lower bounds for $\sdepth_S(IS\cap JS)$ and $\sdepth_S(S/(IS+JS))$ were given in \cite{apop}, respective in \cite{asia}. An important fact, which will use implicitly in our paper, is that $\sdepth_S(IS)=\sdepth_{S'}(I)+n-r$, see \cite{hvz}. Also, obviously, $\depth_S(IS)=\depth_{S'}(I)+n-r$. In \cite{asia}, A.\ Rauf conjectured that $\sdepth_S(I)\geq \sdepth_S(S/I)+1$. We prove that this inequality holds, if $\sdepth_S(I)=\sdepth_{S[y_1]}(I,y_1)$, see Remark $1.4$. In the first section we also give some corollaries of Theorem $1.1$. 

In section $2$, we consider the more general case, when $I,J\subset S$ are two monomial ideals. In Theorem $2.2$, we give lower bounds for $\sdepth_S(I+J), \sdepth_S(I\cap J), \sdepth_S(S/(I+J))$ and $\sdepth_S(S/(I\cap J))$, where $I,J\subset S$ are two monomial ideals. In section $3$, we prove that if $I\subset S$ is a monomial ideal, and $v\in S$ a monomial, then $\sdepth_S{S/(I:v)}\geq \sdepth_S(S/I)$, see Proposition $2.7$. As a consequence, we give lower bounds for $\sdepth_S(I:J)$ and $\sdepth_S(S/(I:J))$, where $I,J\subset S$ are monomial ideals, see Corollary $2.12$. Also, if $I\subset S$ is a monomial ideal, we give some bounds for $\sdepth_S(I)$ and $\sdepth_S(S/I)$, in terms of the irreducible irredundant decomposition of $I$, see Corollary $2.13$, and in terms of the primary irredundant decomposition of $I$, see Corollary $2.14$.

In section $3$, we give several equivalent forms of Stanley Conjecture for $I$ and $S/I$, where $I\subset S$ is a monomial ideal. See Propositions $3.1$, $3.3$, $3.4$ and $3.8$.

\vspace{3mm} \noindent {\footnotesize
\begin{minipage}[b]{15cm}
 Mircea Cimpoeas, Simion Stoilow Institute of Mathematics of the Romanian Academy\\
 E-mail: mircea.cimpoeas@imar.ro
\end{minipage}}

\newpage
\section{Case of ideals with disjoint support}

We denote $S=K[x_1,\ldots,x_n]$ the ring of polynomials in $n$ variables, where $n\geq 2$. For a monomial $u\in S$, we denote $\supp(u)=\{x_i:\; x_i|u\}$. We begin this section with the following lemma.

\begin{lema}
Let $u,v\in S$ be two monomials and $Z,W\subset \{x_1,\ldots,x_n\}$, such that $\supp(u)\subset W$ and $\supp(v)\subset Z$. Then $uK[Z] \cap vK[W] = \lcm(u,v) K[Z\cap W]$.
\end{lema}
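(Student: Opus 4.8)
The plan is to prove the two monomial generators describe the same $K$-vector space by a double inclusion, working monomial by monomial since all modules involved are $\mathbb{Z}^n$-graded. First I would record the elementary fact that for a monomial $w$ and a set $Z$, a monomial $w'$ lies in $wK[Z]$ if and only if $w \mid w'$ and $\supp(w'/w) \subseteq Z$; equivalently, writing exponent vectors, $w' \in wK[Z]$ iff $w' \geq w$ componentwise and $w'_i = w_i$ for all $i$ with $x_i \notin Z$. Since $uK[Z]$, $vK[W]$ and their intersection are all $\mathbb{Z}^n$-graded $K$-subspaces of $S$, it suffices to check that a monomial $w'$ lies in $uK[Z]\cap vK[W]$ if and only if it lies in $\lcm(u,v)K[Z\cap W]$.

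For the inclusion $\supseteq$: a monomial in $\lcm(u,v)K[Z\cap W]$ has the form $\lcm(u,v)\cdot t$ with $\supp(t)\subseteq Z\cap W$. Since $u \mid \lcm(u,v)$ and $\supp(\lcm(u,v)/u)\subseteq \supp(v)\subseteq Z$ (using the hypothesis $\supp(v)\subseteq Z$) and $\supp(t)\subseteq Z$, we get $\lcm(u,v)t \in uK[Z]$; symmetrically, using $\supp(u)\subseteq W$, it lies in $vK[W]$. For the reverse inclusion $\subseteq$: suppose a monomial $w'$ lies in $uK[Z]\cap vK[W]$. Then $u \mid w'$ and $v \mid w'$, so $\lcm(u,v) \mid w'$; write $w' = \lcm(u,v)\cdot t$. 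It remains to show $\supp(t)\subseteq Z\cap W$. For $\supp(t)\subseteq Z$: for each $i$ with $x_i\notin Z$, from $w'\in uK[Z]$ we have $w'_i = u_i$; since $\lcm(u,v)_i = \max(u_i,v_i) \geq u_i$ and $\lcm(u,v)_i \le w'_i = u_i$, we get $\lcm(u,v)_i = u_i$, hence $t_i = 0$. Symmetrically $\supp(t)\subseteq W$ using $w'\in vK[W]$.

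The argument is essentially a bookkeeping exercise in exponent vectors, so there is no real obstacle; the one point that requires the hypotheses in an essential way is showing $\supp(\lcm(u,v)/u)\subseteq Z$ in the $\supseteq$ direction, where one needs precisely $\supp(v)\subseteq Z$ (and the symmetric statement). I would present the whole proof in the language of exponent vectors to keep the divisibility and support conditions transparent, and note at the end that the equality of graded $K$-vector spaces follows since both sides have the same monomial $K$-basis.
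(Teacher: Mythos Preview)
Your proof is correct and follows essentially the same route as the paper's own proof: a double inclusion argued monomial by monomial, using that $\lcm(u,v)/u$ has support contained in $\supp(v)\subseteq Z$ for the $\supseteq$ direction, and that any common monomial is divisible by $\lcm(u,v)$ with quotient supported in $Z\cap W$ for the $\subseteq$ direction. The only cosmetic difference is that you phrase the $\subseteq$ step in terms of exponent vectors, whereas the paper writes the quotient as $c=a/(\lcm(u,v)/u)=b/(\lcm(u,v)/v)$ and observes $c\mid a\in K[Z]$, $c\mid b\in K[W]$; these are the same argument in different notation.
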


\begin{proof}
"$\supseteq$": Since $\lcm(u,v)=u \cdot (v/gcd(u,v))$ and $\supp(v)\in K[Z]$, it follows that $\lcm(u,v)\in uK[Z]$. Analogously, $\lcm(u,v)\in vK[W]$ and therefore, it follows that $\lcm(u,v)\in uK[Z]\cap vK[W]$.

"$\subseteq$": Let $w\in uK[Z] \cap vK[W]$ be a monomial. It follows that $w = u\cdot a = v\cdot b$, where $a\in K[Z]$ and $b\in K[W]$ are some monomial. Thus $\lcm(u,v)|w$ and $w = \lcm(u,v)\cdot c$, where $c= w/\lcm(u,v)= a/(\lcm(u,v)/u) = b/(\lcm(u,v)/v)$. Therefore, $c\in K[Z]\cap K[W] = K[Z\cap W]$. 
\end{proof}

\begin{teor}
Let $I\subset S'=K[x_1,\ldots,x_r]$, $J\subset S''=K[x_{r+1},\ldots,x_n]$ be monomial ideals, where $1\leq r<n$. Then, we have the following inequalities:

(1) $\sdepth_S(IS) \geq \sdepth_S(IS+JS) \geq \min\{ \sdepth_S(IS), \sdepth_{S''}(J) + \sdepth_{S'}(S'/I) \}$.

(2) $\sdepth_S(IS\cap JS) \geq \sdepth_{S'}(I) + \sdepth_{S''}(J)$.

(3) $\sdepth_S(S/IS) \geq \sdepth_S(S/(IS\cap JS)) \geq \min \{ \sdepth_S(S/IS),  \sdepth_{S''}(S''/J) + \sdepth_{S'}(I)\}$.

(4) $\sdepth_S(S/(IS+JS)) \geq \sdepth_{S'}(S'/I) + \sdepth_{S''}(S''/J)$.

(5) $\depth_S(S/(IS\cap JS))-1 = \depth_S(S/(IS+JS)) = \depth_{S'}(S'/I)+\depth_{S''}(S''/J)$.

(6) $\depth_S(IS \cap JS) = \depth_S(IS+JS)+1 = 
\depth_{S'}(I) + \depth_{S''}(J)$ and \linebreak $\depth_S((IS+JS)/IS)= \depth_S(IS+JS)$.
\end{teor}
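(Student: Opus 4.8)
The plan is to reduce everything to the tensor factorization $S\cong S'\otimes_K S''$. Under it, $IS+JS$ corresponds to $I\otimes_K S''+S'\otimes_K J$, $IS\cap JS$ to $I\otimes_K J$, $S/(IS+JS)$ to $(S'/I)\otimes_K(S''/J)$, and $(IS+JS)/IS\cong JS/(IS\cap JS)$ to $(S'/I)\otimes_K J$; moreover, as $\mathbb Z^n$-graded $K$-vector spaces one has the splittings $IS+JS=IS\oplus\big((S'/I)\otimes_K J\big)$ and $S/(IS\cap JS)=S/IS\oplus\big(I\otimes_K(S''/J)\big)$. These identifications are the backbone of all six parts. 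Throughout write $X'=\{x_1,\dots,x_r\}$, $X''=\{x_{r+1},\dots,x_n\}$.

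For (2) I would take optimal Stanley decompositions $I=\bigoplus_i u_iK[Z_i]$ over $S'$ and $J=\bigoplus_j v_jK[W_j]$ over $S''$, extend them to $IS=\bigoplus_i u_iK[Z_i\cup X'']$ and $JS=\bigoplus_j v_jK[W_j\cup X']$, and intersect termwise; Lemma 1.1 gives $u_iK[Z_i\cup X'']\cap v_jK[W_j\cup X']=\lcm(u_i,v_j)K[Z_i\cup W_j]$, and since $Z_i\subseteq X'$, $W_j\subseteq X''$ are disjoint, $|Z_i\cup W_j|=|Z_i|+|W_j|$, whence $\sdepth_S(IS\cap JS)\ge\sdepth_{S'}(I)+\sdepth_{S''}(J)$. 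For (4), tensoring optimal Stanley decompositions of $S'/I$ and $S''/J$ produces a Stanley decomposition of $(S'/I)\otimes_K(S''/J)=S/(IS+JS)$ with Stanley spaces $u_iv_jK[Z_i\cup W_j]$, giving the bound $\sdepth_{S'}(S'/I)+\sdepth_{S''}(S''/J)$. The lower bounds in (1) and (3) then follow by combining this with the $K$-vector space splittings above: decompose the first summand optimally as $IS$ (resp.\ $S/IS$) and the mixed summand by the construction just used, and take the minimum of the two resulting sdepth values.

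For (5) and (6) the main input is the classical formula $\depth_S(M\otimes_K N)=\depth_{S'}(M)+\depth_{S''}(N)$ for graded $S'$- and $S''$-modules (proved by tensoring minimal graded free resolutions and applying Auslander--Buchsbaum, or via the K\"unneth formula for local cohomology). This yields $\depth_S(S/(IS+JS))=\depth_{S'}(S'/I)+\depth_{S''}(S''/J)$ and $\depth_S((IS+JS)/IS)=\depth_{S'}(S'/I)+\depth_{S''}(J)=\depth_S(IS+JS)$. The remaining depth equalities I would extract from the Depth Lemma applied to the Mayer--Vietoris sequences $0\to S/(IS\cap JS)\to S/IS\oplus S/JS\to S/(IS+JS)\to 0$ and $0\to IS\cap JS\to IS\oplus JS\to IS+JS\to 0$, using $\depth_S(IS)=\depth_{S'}(I)+(n-r)$, $\depth_S(JS)=\depth_{S''}(J)+r$, and the numerical fact that for nonzero proper $I,J$ one has $\depth_S(S/(IS+JS))+1\le\min\{\depth_S(S/IS),\depth_S(S/JS)\}$; this forces all Depth Lemma inequalities to equalities and gives $\depth_S(S/(IS\cap JS))=\depth_S(S/(IS+JS))+1$ and $\depth_S(IS\cap JS)=\depth_S(IS+JS)+1=\depth_{S'}(I)+\depth_{S''}(J)$.

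The genuinely delicate points are the upper bounds $\sdepth_S(IS)\ge\sdepth_S(IS+JS)$ and $\sdepth_S(S/IS)\ge\sdepth_S(S/(IS\cap JS))$, i.e.\ that enlarging the ideal (resp.\ the quotient) in these disjoint-support configurations cannot raise the Stanley depth. My approach would be to start from an optimal Stanley decomposition $\mathcal D$ of $IS+JS$, intersect each Stanley space $m_kK[Z_k]$ with $IS$ --- which equals $m_k$ times the extension to $K[Z_k]$ of the monomial ideal $L_k:=(I:m_k')\cap K[Z_k\cap X']$ of $K[Z_k\cap X']$, where $m_k'$ is the $X'$-part of $m_k$ --- and then refine each such piece by an optimal Stanley decomposition of $L_k$, producing a Stanley decomposition of $IS$. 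The obstacle, which I expect to be the bulk of the work, is that the refinement step may a priori lower the Stanley depth (the contribution $\sdepth_{K[Z_k\cap X']}(L_k)+|Z_k\cap X''|$ can fall below $\sdepth(\mathcal D)$), so one must either argue $\mathcal D$ can be chosen to avoid this, or pass to the Herzog--Vladoiu--Zheng interval-partition model and exploit the special product shape $P^{\mathbf g}_{IS}=P^{\mathbf g'}_I\times[\mathbf 0,\mathbf g'']$ of the order filter $P^{\mathbf g}_{IS}\subseteq P^{\mathbf g}_{IS+JS}$ (here $\mathbf g=(\mathbf g',\mathbf g'')$ is the lcm-exponent of $G(I)\cup G(J)$); the statement for (3) is handled symmetrically, working with the complementary order ideals.
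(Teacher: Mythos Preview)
Your treatment of (2), (4), the lower bounds in (1) and (3), and the depth statements (5)--(6) agrees with the paper's approach: the paper in fact cites external references for (2) and (4) while you supply the standard direct argument via Lemma~1.1, and it handles (5)--(6) through the same Mayer--Vietoris sequence and Depth Lemma reasoning you sketch (your use of the tensor--depth formula $\depth_S(M\otimes_K N)=\depth_{S'}(M)+\depth_{S''}(N)$ is a clean way to package the second half of (6)).

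The real discrepancy is in the upper bounds of (1) and (3), which you flag as ``the genuinely delicate points'' and for which you propose intersecting each Stanley space of an optimal decomposition of $IS+JS$ with the \emph{ideal} $IS$, then refining the resulting pieces --- correctly noting that this refinement may lower the Stanley depth, and suggesting a detour through the Herzog--Vladoiu--Zheng poset model. You have missed the paper's much simpler device: intersect with the \emph{subring} $S'=K[x_1,\dots,x_r]$ rather than with the ideal $IS$. Since $JS\cap S'=(0)$ one has $(IS+JS)\cap S'=I$, and by Lemma~1.1 each Stanley space $w_kK[W_k]$ meets $S'$ either in $(0)$ (when $w_k\notin S'$) or in the single Stanley space $w_kK[W_k\cap X']$ (when $w_k\in S'$). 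No refinement whatsoever is needed: one obtains directly a Stanley decomposition $I=\bigoplus_{w_k\in S'}w_kK[W_k\cap X']$ over $S'$, hence $IS=\bigoplus_{w_k\in S'}w_kK[W_k\cup X'']$ over $S$, and since $|W_k\cup X''|\ge|W_k|$ the inequality $\sdepth_S(IS)\ge\sdepth_S(IS+JS)$ drops out in one line. The paper asserts that the upper bound in (3) is handled by the same trick applied to a Stanley decomposition of $S/(IS\cap JS)$.
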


\begin{proof}
(1) For the first inequality, let $IS + JS = \bigoplus_{i=1}^r w_iK[W_i]$ be a Stanley decomposition of the ideal $IS+ JS\subset S$. Note that $(IS+JS)\cap S' = IS\cap S' = I$, since $JS\cap S'=(0)$. Therefore, $I=\bigoplus_{i=1}^r (w_iK[W_i]\cap S')$. If $w_i\in S'$, we have $w_iK[W_i]\cap S' = w_iK[W_i\cap \{x_1,\ldots,x_r\}]$, by Lemma $1.1$. On the other hand, if $w_i\notin S'$, we have $w_iK[W_i]\cap S' = (0)$. Thus, $I = \bigoplus_{w_i\in S'} w_i K[W_i\cap \{x_1,\ldots,x_r\}]$. It follows that $IS = \bigoplus_{w_i\in S'} w_i K[W_i\cup \{x_{r+1},\ldots,x_{n}\}]$. Therefore, 
$\sdepth_{S}(IS+JS)\leq \sdepth_{S}(IS)$.

In order to prove the second inequality, we consider the Stanley decompositions $S'/I=\bigoplus_{i=1}^r u_iK[U_i]$ and $J = \bigoplus_{j=1}^s v_jK[V_j]$. It follows that $S/IS=\bigoplus_{i=1}^r u_iK[U_i\cup\{x_{r+1},\ldots, x_n\}]$ and
$JS = \bigoplus_{j=1}^s v_jK[V_j\cup \{x_1,\ldots,x_r\}]$ are Stanley decompositions for $S/IS$, respectively for $JS$. We consider the decomposition: 
\[(*)\;\;\;\; IS+JS = ((IS+JS)\cap IS) \oplus ((IS+JS)\cap S/IS) = IS \oplus (JS\cap S/IS).\]
We have $JS\cap S/IS = \bigoplus_{i=1}^r \bigoplus_{j=1}^s u_iK[U_i\cup\{x_{r+1},\ldots, x_n\}] \cap v_j K[V_j\cup \{x_{r+1},\ldots, x_n\}]$. Since $u_i\in S'$ and $v_j\in S''$ for all $(i,j)'s$, by Lemma $1.1$, it follows that $JS\cap S/IS = \bigoplus_{i=1}^r \bigoplus_{j=1}^s u_iv_j K[U_i\cup V_j]$ and therefore 
$\sdepth_S(JS\cap S/IS)\geq \sdepth_S''(J)$. Thus, by $(*)$, we get the required conclusion.

(2) It was proved in \cite[Lemma 1.1]{apop}.

(3) For the first inequality, let $S/(IS+JS)= \bigoplus_{i=1}^r w_iK[W_i]$ be a Stanley decomposition of $S/(IS+JS)$. As in the proof of (1), we get $S/IS = \bigoplus_{w_i\in S'} w_iK[W_i \cup \{x_{r+1},\ldots x_n\}]$ and thus we get $\sdepth_S(S/IS) \geq \sdepth_S(S/(IS\cap JS))$. In order to prove the second inequality, we consider the decomposition:
\[ S/(IS\cap JS) = (S/(IS\cap JS) \cap S/IS) \oplus (S/(IS\cap JS) \cap IS ) = S/IS \oplus ((S/JS) \cap IS) \]
and, as in the proof of $(1)$, we get $\sdepth_S ((S/JS) \cap IS) \geq \sdepth_{S'}(I) + \sdepth_{S''}(S''/J)$ and thus we obtain the required conclusion.

(4) It was proved in \cite[Theorem 3.1]{asia}.

(5) It is a consequence of Depth's Lemma for the short exact sequence of $S$-modules
\[ 0 \rightarrow S/(IS\cap JS) \rightarrow S/IS \oplus S/JS \rightarrow S/(IS+JS) \rightarrow 0. \]
See also \cite[Lemma 1.1]{apop} for more details.

(6) The first equality is a direct consequence of (5). The second follows by Depth Lemma for the short exact sequence $0\rightarrow I\rightarrow I+J \rightarrow (I+J)/I \rightarrow 0$.
\end{proof}

\begin{obs}
If $I\subset S$ is a monomial ideal, we define \emph{the support of $I$} to be the set $\supp(I)=\bigcup_{u\in G(I)}\supp(u)$, where $G(I)$ is the set on minimal monomial generators of $I$. With this notation, we can reformulate Theorem $1.2$ in terms of two monomial ideals $I,J\subset S$ with $\supp(I)\cap \supp(J)=\emptyset$. The conclusions should be also modified, as follows. If $I,J\subset S$ are two monomial ideals with disjoint supports, then $\sdepth_S(I\cap J)\geq \sdepth_S(I)+\sdepth_S(J)-n$ etc.

With the above notations, we may consider the short exact sequences $0 \rightarrow I \rightarrow I+J \rightarrow (I+J)/I \rightarrow 0$ and $0 \rightarrow I/(I\cap J) \cong (I+J)/J \rightarrow S/(I\cap J) \rightarrow S/J \rightarrow 0$. It follows that $\sdepth_S(I+J)\geq \min\{\sdepth_S(I),\sdepth_S((I+J)/I)\}$ and $\sdepth_S(S/(I\cap J))\geq \min\{\sdepth_S(S/I),\sdepth_S((I+J)/J)\}$. Note that $(I+J)/I = J\cap (S/I)$ and $(I+J)/J = I\cap (S/J)$. From the proof of Theorem $1.2(1)$, we get $\sdepth_S((I+J)/I)\geq \sdepth_S(J) + \sdepth_S(S/I) - n$, if $\supp(I)\cap \supp(J)=\emptyset$.
\end{obs}

We recall the facts that if $I=(u_1,\ldots,u_m)\subset S$ is a monomial complete intersection, then $\sdepth_S(I)=n-\left\lfloor m/2 \right\rfloor$, see \cite[Theorem 2.4]{shen} and $\sdepth_S(S/I) = n - m$, see \cite[Theorem 1.1]{asia1}. On the other hand, if $I=(u_1,\ldots,u_m)\subset S$ is an arbitrary monomial ideal,
then, according to \cite[Theorem 2.1]{okazaki}, $\sdepth_S(I)\geq n-\left\lfloor m/2 \right\rfloor$ and
according to \cite[Proposition 1.2]{mir}, $\sdepth_S(S/I)\geq n-m$. Using these results, we proved the following:

\begin{cor}
Let $I\subset S'=K[x_1,\ldots,x_r]$ be a monomial ideal and $J=(u_1,\ldots,u_m)\subset S''=K[x_{r+1},\ldots,x_n]$ 
be a monomial ideal. Then:

(1) $\sdepth_S(IS) \geq \sdepth_S(IS+JS) \geq \min\{ \sdepth_S(IS), 
     \sdepth_{S}(S/SI) - \left\lfloor m/2 \right\rfloor \}$.
     
(2) $\sdepth_S(IS\cap JS) \geq \sdepth_{S}(IS) - \left\lfloor m/2 \right\rfloor $.

(3) $\sdepth_S(S/IS) \geq \sdepth_S(S/(IS\cap JS)) \geq \min \{ \sdepth_S(S/IS), \sdepth_{S}(IS) - m\}$.

(4) $\sdepth_S(S/(IS+JS)) \geq \sdepth_{S}(S/IS) - m$.

(5) In particular, if $J$ is complete intersection, then:
$\depth_S(S/(IS\cap JS))-1 = \depth_S(S/(IS+JS)) = \depth_{S}(S/IS) - m$.
\end{cor}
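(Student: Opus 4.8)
The plan is to derive Corollary 1.6 directly from Theorem 1.2 by specializing the bounds in parts (1)--(5) using the known inequalities for complete intersections and arbitrary monomial ideals recalled just before the statement. The only input about $J=(u_1,\ldots,u_m)\subset S''$ that we need is: $\sdepth_{S''}(J)\geq n-r-\lfloor m/2\rfloor$ (by \cite[Theorem 2.1]{okazaki} applied in the polynomial ring $S''$ in $n-r$ variables) and $\sdepth_{S''}(S''/J)\geq n-r-m$ (by \cite[Proposition 1.2]{mir}), together with the equalities $\sdepth_{S}(IS)=\sdepth_{S'}(I)+n-r$ and, crucially, the identities $\sdepth_S(S/IS)=\sdepth_{S'}(S'/I)+n-r$ and $\depth_S(S/IS)=\depth_{S'}(S'/I)+n-r$, which let us rewrite everything in terms of quantities on $S$ rather than on $S'$.

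First I would handle (1): Theorem 1.2(1) gives $\sdepth_S(IS)\geq\sdepth_S(IS+JS)\geq\min\{\sdepth_S(IS),\ \sdepth_{S''}(J)+\sdepth_{S'}(S'/I)\}$. Now $\sdepth_{S''}(J)\geq (n-r)-\lfloor m/2\rfloor$ and $\sdepth_{S'}(S'/I)=\sdepth_S(S/IS)-(n-r)$, so the second term in the $\min$ is at least $\sdepth_S(S/IS)-\lfloor m/2\rfloor$; since $\min$ is monotone this yields the claimed bound. Part (3) is identical in spirit: in Theorem 1.2(3) replace $\sdepth_{S''}(S''/J)$ by its lower bound $(n-r)-m$ and use $\sdepth_{S'}(I)=\sdepth_S(IS)-(n-r)$, so that $\sdepth_{S''}(S''/J)+\sdepth_{S'}(I)\geq \sdepth_S(IS)-m$. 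For (2), Theorem 1.2(2) says $\sdepth_S(IS\cap JS)\geq\sdepth_{S'}(I)+\sdepth_{S''}(J)$; substitute $\sdepth_{S''}(J)\geq(n-r)-\lfloor m/2\rfloor$ and $\sdepth_{S'}(I)=\sdepth_S(IS)-(n-r)$ to get $\geq\sdepth_S(IS)-\lfloor m/2\rfloor$. Part (4) is the same manipulation on Theorem 1.2(4), using $\sdepth_{S''}(S''/J)\geq(n-r)-m$ and $\sdepth_{S'}(S'/I)=\sdepth_S(S/IS)-(n-r)$.

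For (5), when $J$ is a complete intersection we have the exact value $\depth_{S''}(S''/J)=(n-r)-m$ (a regular sequence of length $m$ in $n-r$ variables). Then Theorem 1.2(5) gives $\depth_S(S/(IS\cap JS))-1=\depth_S(S/(IS+JS))=\depth_{S'}(S'/I)+\depth_{S''}(S''/J)=\depth_{S'}(S'/I)+(n-r)-m=\depth_S(S/IS)-m$, using $\depth_S(S/IS)=\depth_{S'}(S'/I)+(n-r)$; all of these are equalities, so no loss occurs.

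I do not expect any genuine obstacle here: the corollary is a routine bookkeeping consequence of Theorem 1.2 once the dimension-shift identities $\sdepth_S(IS)=\sdepth_{S'}(I)+n-r$, $\sdepth_S(S/IS)=\sdepth_{S'}(S'/I)+n-r$ (and their $\depth$ analogues) are invoked. The one point to be careful about is bookkeeping of the variable counts: $J$ lives in $S''=K[x_{r+1},\ldots,x_n]$, which has $n-r$ variables, so Okazaki's and the $S/I$ bounds must be applied with $n-r$ in place of $n$; conflating $n$ and $n-r$ would produce the wrong constants. A second minor point is that the monotonicity of $\min$ must be used to pass from the Theorem 1.2 bounds to the weaker (but cleaner) bounds stated in the corollary — replacing a term inside a $\min$ by something smaller can only decrease the $\min$, which is exactly the direction we want since these are lower bounds.
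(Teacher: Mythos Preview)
Your proposal is correct and follows exactly the approach indicated in the paper: apply the corresponding parts of Theorem~1.2 and substitute the bounds $\sdepth_{S''}(J)\geq (n-r)-\lfloor m/2\rfloor$ (Okazaki) and $\sdepth_{S''}(S''/J)\geq (n-r)-m$ (Cimpoea\c{s}), together with the dimension-shift identities relating $S'$- and $S$-invariants. The paper gives no more detail than the sentence ``Using these results, we proved the following,'' so your write-up is in fact a faithful expansion of the intended argument.
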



\begin{obs}
\emph{Let $I\subset S = K[x_1,\ldots,x_n]$ be a monomial ideal. If we denote $\bar{S}=S[y_1,\ldots,y_m]$, then, by Corollary $1.4(1)$, we have
\[ \sdepth_S(I)+m \geq \sdepth_{\bar{S}}(I,y_1,\ldots,y_m) \geq \min\{ \sdepth_{S}(I) + m , \sdepth_S(S/I) + \left\lceil  m/2 \right\rceil \}.\]}
\emph{Assume $\sdepth_S(I)+m > \sdepth_{\bar{S}}(I,y_1,\ldots,y_m)$. It follows that $\sdepth_{S}(I) + m > \sdepth_S(S/I) + \left\lceil m/2 \right\rceil $ and therefore 
$\sdepth_S(I) \geq \sdepth_S(S/I) + \left\lfloor m/2 \right\rfloor + 1$.
In particular, if $m=1$ and $\sdepth_{\bar{S}}(I,y_1)=\sdepth_S(I)$, then $\sdepth_S(I)\geq \sdepth_S(S/I) + 1$ and thus we get a positive answer to the problem put by Asia in \cite{asia}.}
\end{obs}

\begin{cor}
With the notations of Theorem $1.2$, we have the followings:

(1) If the Stanley conjecture hold for $I$ and $J$, then the Stanley conjecture holds for $IS\cap JS$.

(2) If the Stanley conjecture hold for $S'/I$ and $S''/J$, then the Stanley conjecture holds for $S/(IS+JS)$.

(3) If the Stanley conjecture hold for $J$ and $S'/I$ or for $I$ and $S''/J$, then the Stanley conjecture hold
for $(IS+JS)$ and $S/(IS\cap JS)$. 
\end{cor}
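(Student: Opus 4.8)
The plan is to read each of the three statements as an instance of the Stanley conjecture inequality $\sdepth_S(M)\ge\depth_S(M)$, and in every case to combine one of the Stanley-depth lower bounds of Theorem 1.2 with the exact value of the corresponding depth supplied by Theorem 1.2(5)--(6). The only extra ingredient needed is the elementary identity $\depth_{S'}(I)=\depth_{S'}(S'/I)+1$ (and likewise for $J$ over $S''$), coming from $0\to I\to S'\to S'/I\to 0$ and $\depth S'=r$; this lets me rewrite the depth formulas of Theorem 1.2(5),(6) in whichever of their equivalent forms fits the bound at hand, and, together with $\sdepth_S(IS)=\sdepth_{S'}(I)+n-r$ and $\depth_S(IS)=\depth_{S'}(I)+n-r$, lets me identify ``Stanley for $IS$'' with ``Stanley for $I$ over $S'$'' (and similarly for $S/IS$ versus $S'/I$).

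For (1): Theorem 1.2(2) gives $\sdepth_S(IS\cap JS)\ge\sdepth_{S'}(I)+\sdepth_{S''}(J)$ and Theorem 1.2(6) gives $\depth_S(IS\cap JS)=\depth_{S'}(I)+\depth_{S''}(J)$, so adding the two hypothesised inequalities $\sdepth_{S'}(I)\ge\depth_{S'}(I)$ and $\sdepth_{S''}(J)\ge\depth_{S''}(J)$ finishes it. For (2) the argument is identical with Theorem 1.2(4), Theorem 1.2(5), and the Stanley conjecture for $S'/I$ and $S''/J$ in place of the above. No subtlety arises in (1)--(2) because the relevant lower bounds are single sums.

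For (3) the lower bounds in Theorem 1.2(1),(3) are each a minimum of two terms, and the task is to check that both terms dominate the relevant depth. Take $IS+JS$: Theorem 1.2(1) gives $\sdepth_S(IS+JS)\ge\min\{\sdepth_S(IS),\,T\}$ with $T=\sdepth_{S''}(J)+\sdepth_{S'}(S'/I)$, and by Theorem 1.2(6) (rewritten via the identity above) the target is $\depth_S(IS+JS)=\depth_{S'}(S'/I)+\depth_{S''}(J)$. The term $T$ dominates this at once by the Stanley conjecture for $J$ and $S'/I$. For the term $\sdepth_S(IS)$, if $T<\sdepth_S(IS)$ then $\sdepth_S(IS)>T\ge\depth_S(IS+JS)$ is immediate; if $T\ge\sdepth_S(IS)$ then the first inequality of Theorem 1.2(1) forces $\sdepth_S(IS+JS)=\sdepth_S(IS)$, and one then combines the strict inequality $\depth_S(IS)>\depth_S(IS+JS)$ — which follows from Theorem 1.2(6) and $\depth_{S''}(J)\le n-r$ — with the Stanley conjecture for $I$ to get $\sdepth_S(IS)\ge\depth_S(IS)>\depth_S(IS+JS)$. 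For $S/(IS\cap JS)$ one runs the same scheme with Theorem 1.2(3),(5): the mixed term is dominated via the Stanley conjecture for the two factors appearing in it, and the pure term $\sdepth_S(S/IS)$ (or, using the symmetric version of the bound, $\sdepth_S(S/JS)$) is controlled by the first inequality of Theorem 1.2(3) together with the relation $\depth_S(S/IS)\ge\depth_S(S/(IS\cap JS))$, which follows from Theorem 1.2(5) and $\depth_{S''}(S''/J)\le n-r-1$. Since $IS+JS$ and $S/(IS\cap JS)$ are symmetric under $(I,S')\leftrightarrow(J,S'')$, the second half of the hypothesis (``Stanley for $I$ and $S''/J$'') delivers the same conclusions after relabelling; one uses both halves of the hypothesis precisely so that the Stanley conjecture is available for every module that shows up as an entry of one of these minima.

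The step I expect to be the real obstacle is exactly this last bookkeeping: keeping track of which instances of the Stanley conjecture are consumed by each entry of each minimum, and verifying that in the equality cases — where a ``pure'' entry $\sdepth_S(IS)$ or $\sdepth_S(S/IS)$ realises the minimum — the Stanley conjecture is genuinely at hand for the module realising it and beats the depth thanks to the strict gaps $\depth_S(IS)>\depth_S(IS+JS)$ and $\depth_S(S/IS)\ge\depth_S(S/(IS\cap JS))$ coming from Theorem 1.2(5),(6). This is the one place where the two-sided nature of Theorem 1.2(1),(3) — the upper bounds $\sdepth_S(IS+JS)\le\sdepth_S(IS)$ and $\sdepth_S(S/(IS\cap JS))\le\sdepth_S(S/IS)$ — is actually used, rather than just the lower bounds.
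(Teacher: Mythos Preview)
Your argument is essentially the paper's own. Parts (1) and (2) are identical: combine Theorem~1.2(2) with 1.2(6), and 1.2(4) with 1.2(5). For (3) the paper also case-splits on which entry of the minimum in Theorem~1.2(1) is attained: when $\sdepth_S(IS+JS)<\sdepth_S(IS)$ it uses $\sdepth_S(IS+JS)\ge T\ge\depth_{S''}(J)+\depth_{S'}(S'/I)=\depth_S(IS+JS)$ via Stanley for $J$ and $S'/I$; when $\sdepth_S(IS+JS)=\sdepth_S(IS)$ it writes $\sdepth_S(IS)\ge\depth_S(IS)=\depth_{S'}(I)+(n-r)\ge\depth_{S'}(I)+\depth_{S''}(J)>\depth_S(IS+JS)$, exactly your second branch with the strict gap coming from $\depth_{S''}(J)\le n-r$. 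The treatment of $S/(IS\cap JS)$ and of the alternate hypothesis is then declared analogous, as you do.

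One caveat on your reading of the hypothesis: the ``or'' in (3) is disjunctive, so each pair of assumptions is meant to yield both conclusions on its own. The paper accordingly works under just one half (``Stanley for $J$ and $S'/I$'') and cites symmetry for the other, whereas you say that ``one uses both halves of the hypothesis''. In practice this distinction is moot here, because the paper's own equality-case step $\sdepth_S(IS)\ge\depth_S(IS)$ is precisely the Stanley inequality for $I$ that you invoke; so at the level of the inequalities actually used, your proposal and the paper's proof coincide.
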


\begin{proof}
(1) It is a direct consequence of Theorem $1.2(2)$ and $1.2(6)$. (2) It is a direct consequence of Theorem $1.2(4)$ and $1.2(5)$.

(3) Assume the Stanley conjecture hold for $J$ and $S'/I$. According to Theorem $1.2(1)$, we have
$\sdepth_S(IS+JS)\geq \min\{ \sdepth_S(IS), \sdepth_{S''}(J) + \sdepth_{S'}(S'/I) \}$. \linebreak If $\sdepth_S(IS+JS)=\sdepth_S(IS)$, then, by $1.2(6)$, we get $\sdepth_S(IS+JS)\geq \depth_S(IS) = \depth_{S'}(I)+n-r \geq \depth_{S'}(I)+\depth_{S''}(J) > \depth_S(IS+JS)$.

If $\sdepth_S(IS+JS)<\sdepth_S(IS)$, it follows that $\sdepth_S(IS+JS)\geq \sdepth_{S''}(J) + \sdepth_{S'}(S'/I) \geq \depth_{S''}(J) + \depth_{S'}(S'/I) = \depth_S(IS+JS)$. In the both cases, the ideal $IS+JS$ satisfies the Stanley conjecture. The case when $I$ and $S''/J$ satisfy the Stanley conjecture is similar. Also, the proof of the fact that $S/(IS\cap JS)$ satisfies the Stanley conjecture follows in the same way from $1.2(3)$ and $1.2(5)$.
\end{proof}

Note that, by the proof of Corollary $1.6(1)$, if $\sdepth_S(IS+JS)=\sdepth_S(IS)$, then $\sdepth_S(IS+JS) \geq \depth_S(IS+JS)+ n-r-\depth_{S''}(S''/J)$. Analogously, if \linebreak $\sdepth_S(S/(IS\cap JS)) = \sdepth_S(IS)$ then $\sdepth_S(S/(IS\cap JS)) \geq \depth_S(S/(IS\cap JS))+n-r-\depth_{S''}(S''/J)$.

\begin{cor}
Let $I_j\subset S_j:=[x_{j1},\ldots,x_{jn_j}]$ be some monomial ideals, where $k\geq 2$,$n_j\geq 1$ and $1\leq j\leq k$. Denote $S=K[x_{ji}:\;1\leq j\leq k,\; 1\leq i\leq n_j]$. Then, the following inequalities hold:

(1) $\sdepth_S(I_1S\cap \cdots \cap I_kS) \geq \sdepth_{S_1}(I_1) + \cdots + \sdepth_{S_k}(I_k)$.

(2) $\sdepth_S(I_1S + \cdots +I_kS) \geq \min \{\sdepth_{S_1}(I_1)+n_2+\cdots+n_k, \sdepth_{S_2}(I_2)+\sdepth_{S_1}(S_1/I_1) + n_3 + \cdots + n_k, \ldots, \sdepth_{S_k}(I_k) + \sdepth_{S_{k-1}}(S_{k-1}/I_{k-1}) + \cdots + \sdepth_{S_{1}}(S_{1}/I_{1}) \}$.

$\sdepth_S(I_1S + \cdots +I_kS) \leq \min\{\sdepth_S(I_jS):\;j=1,\ldots,k\}$.

(3) $\sdepth_S(S/(I_1S\cap \cdots \cap I_kS)) \geq \min \{\sdepth_{S_1}(S_1/I_1)+n_2+\cdots+n_k, \sdepth_{S_2}(S_2/I_2)+\sdepth_{S_1}(I_1) + n_3 + \cdots + n_k, \ldots, \sdepth_{S_k}(S_k/I_k) + \sdepth_{S_{k-1}}(I_{k-1}) + \cdots + \sdepth_{S_{1}}(I_{1})\}$

$\sdepth_S(S/(I_1S\cap \cdots \cap I_kS)) \leq \min\{\sdepth_S(S/I_jS):\;j=1,\ldots,k \}$.

(4) $\sdepth_S(S/(I_1S+\cdots+I_kS)) \geq \sdepth_{S_1}(I_1S) + \cdots + \sdepth_{S_k}(I_kS)$.

(5) $\depth_S(I_1S\cap \cdots \cap I_kS) = \depth_S(I_1S+\cdots+I_kS)+(k-1) = \depth_{S_1}(I_1) + \cdots + \depth_{S_k}(I_k)$.
\end{cor}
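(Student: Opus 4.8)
The plan is to prove all six assertions by induction on $k$, with Theorem $1.2$ --- read through the disjoint-support reformulation of Remark $1.3$ --- serving as the base case $k=2$. For the inductive step I would fix $k\geq 3$, set $S'=K[x_{ji}:\;2\leq j\leq k,\;1\leq i\leq n_j]$, and collapse the last $k-1$ ideals into a single pair of ideals of $S'$, namely $J_\cap=I_2S'\cap\cdots\cap I_kS'$ and $J_+=I_2S'+\cdots+I_kS'$. First I would record two standing facts. Since $S=S'[x_{11},\ldots,x_{1n_1}]$ is free, hence flat, over $S'$, extension of scalars commutes with finite intersections, so $J_\cap S=I_2S\cap\cdots\cap I_kS$ (while trivially $J_+S=I_2S+\cdots+I_kS$); and $\supp(I_1)$ is disjoint from both $\supp(J_\cap)$ and $\supp(J_+)$. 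Throughout I would use the identities $\sdepth_S(I_jS)=\sdepth_{S_j}(I_j)+(n-n_j)$, $\sdepth_S(S/I_jS)=\sdepth_{S_j}(S_j/I_j)+(n-n_j)$, $\depth_S(I_jS)=\depth_{S_j}(I_j)+(n-n_j)$ from \cite{hvz}, and likewise over $S'$.

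With this setup, (1) is immediate: apply Theorem $1.2(2)$ to $I_1\subset S_1$ and $J_\cap\subset S'$ to get $\sdepth_S(I_1S\cap\cdots\cap I_kS)\geq\sdepth_{S_1}(I_1)+\sdepth_{S'}(J_\cap)$, then feed in the induction hypothesis $\sdepth_{S'}(J_\cap)\geq\sum_{j=2}^k\sdepth_{S_j}(I_j)$. For the lower bound in (2), Theorem $1.2(1)$ applied to $I_1$ and $J_+$ gives $\sdepth_S(I_1S+\cdots+I_kS)\geq\min\{\sdepth_S(I_1S),\ \sdepth_{S'}(J_+)+\sdepth_{S_1}(S_1/I_1)\}$; I would then substitute $\sdepth_S(I_1S)=\sdepth_{S_1}(I_1)+n_2+\cdots+n_k$, which is the first entry of the claimed minimum, and substitute the $(k-1)$-fold inductive lower bound for $\sdepth_{S'}(J_+)$, distributing the extra summand $\sdepth_{S_1}(S_1/I_1)$ across its $k-1$ entries to recover the remaining $k-1$ entries of the stated minimum. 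The upper bound in (2) comes from writing, for each $j$, the sum as $I_jS+(\sum_{l\neq j}I_lS)$ --- two ideals with disjoint supports --- and invoking the first inequality of Theorem $1.2(1)$, then taking the minimum over $j$. Parts (3) and (4) would be handled by exactly the same induction, using Theorem $1.2(3)$ and Theorem $1.2(4)$ in place of $1.2(1)$ and $1.2(2)$ and interchanging the roles of $I_1$ and $S_1/I_1$ where appropriate (using $\sdepth_S(S/I_1S)=\sdepth_{S_1}(S_1/I_1)+n_2+\cdots+n_k$); for the upper bound in (3) one uses the first inequality of Theorem $1.2(3)$ on $I_jS\cap(\bigcap_{l\neq j}I_lS)$.

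For (5) I would apply Theorem $1.2(6)$ twice --- once to the pair $I_1,J_\cap$ and once to the pair $I_1,J_+$ --- to get $\depth_S(I_1S\cap\cdots\cap I_kS)=\depth_{S_1}(I_1)+\depth_{S'}(J_\cap)$ and $\depth_S(I_1S+\cdots+I_kS)=\depth_{S_1}(I_1)+\depth_{S'}(J_+)-1$. The induction hypothesis for (5) supplies $\depth_{S'}(J_\cap)=\sum_{j=2}^k\depth_{S_j}(I_j)$ and $\depth_{S'}(J_+)=\sum_{j=2}^k\depth_{S_j}(I_j)-(k-2)$, and substituting yields the three-term equality in (5). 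I expect the only real work to be bookkeeping rather than any conceptual hurdle: correctly unfolding the nested minima in (2) and (3) so that the inductively produced minimum over $k-1$ terms merges with the extra $\sdepth_S(I_1S)$ (resp. $\sdepth_S(S/I_1S)$) term into the stated minimum over $k$ terms, and keeping the variable-count shifts $n-n_j$ straight. The one point genuinely worth spelling out is the flatness argument ensuring $J_\cap S=I_2S\cap\cdots\cap I_kS$, so that the intersection may legitimately be regrouped as $I_1S\cap J_\cap S$ before Theorem $1.2$ is applied.
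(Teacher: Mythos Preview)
Your proposal is correct and follows exactly the approach the paper uses: the paper's proof is the single sentence ``We use induction on $k\geq 2$ and we apply Theorem $1.2$,'' and your write-up is a careful unpacking of precisely that induction. The extra care you take with the flatness identification $J_\cap S=I_2S\cap\cdots\cap I_kS$ and the variable-count bookkeeping is more detail than the paper supplies, but none of it deviates from the intended argument.
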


\begin{proof}
We use induction on $k\geq 2$ and we apply Theorem $1.2$.
\end{proof}

\begin{cor}
With the notation of the previous Corollary, we have:

(1) If $I_1,\ldots,I_k$ satisfy the Stanley Conjecture, then $I_1S \cap \cdots \cap I_kS$ satisfies the Stanley Conjecture.

(2) If $1\leq l\leq n$ is an integer and the Stanley conjecture holds for $I_l$ and $S/I_j$ for all $j\neq l$ then, the Stanley Conjecture holds for $I_1S+\cdots+I_kS$.

(3) If $1\leq l\leq n$ is an integer and the Stanley conjecture holds for $S_l/I_l$ and $I_j$ for all $j\neq l$ then, the Stanley Conjecture holds for $S/(I_1S\cap \cdots \cap I_kS)$.

(4) If $S/I_1,\ldots,S/I_k$ satisfy the Stanley Conjecture, then $S/(I_1S + \cdots + I_kS)$ satisfies the Stanley Conjecture.
\end{cor}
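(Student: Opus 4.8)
The plan is to derive all four items by induction on $k\ge 2$, peeling off the last ideal $I_k$ at each step and invoking the case $k=2$, Corollary $1.6$. Throughout, put $T=K[x_{ji}:\;1\le j\le k-1,\ 1\le i\le n_j]$, so that $S=T[x_{k1},\ldots,x_{kn_k}]$, and set $I'=I_1T+\cdots+I_{k-1}T$ and $I''=I_1T\cap\cdots\cap I_{k-1}T$, two ideals of $T$. Because $S$ is a free (hence flat) $T$-module, extension of ideals commutes with finite intersections, so $I'S=I_1S+\cdots+I_{k-1}S$ and $I''S=I_1S\cap\cdots\cap I_{k-1}S$; hence
\[ I_1S+\cdots+I_kS=I'S+I_kS,\qquad I_1S\cap\cdots\cap I_kS=I''S\cap I_kS, \]
and in both cases the two ideals on the right have disjoint supports. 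This is precisely the situation of Corollary $1.6$, with $T$ and $S_k$ in the roles of $S'$ and $S''$.

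Items (1) and (4) are then immediate inductions. For (1): if every $I_j$ satisfies Stanley's conjecture, the induction hypothesis gives it for $I''$ in $T$, the hypothesis gives it for $I_k$ in $S_k$, and Corollary $1.6(1)$ applied to $I''\subset T$, $I_k\subset S_k$ yields it for $I''S\cap I_kS=I_1S\cap\cdots\cap I_kS$; the base case $k=2$ is Corollary $1.6(1)$. Item (4) is the same argument with $I'$ in place of $I''$, the quotients $S_j/I_j$ in place of the $I_j$, and Corollary $1.6(2)$ in place of Corollary $1.6(1)$. (Alternatively, (1) follows directly from the inequality of the previous Corollary together with its depth formula $1.7(5)$, and (4) follows from the same inequality once one has iterated Theorem $1.2(5)$ to obtain $\depth_S(S/(I_1S+\cdots+I_kS))=\sum_{j=1}^k\depth_{S_j}(S_j/I_j)$.)

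For (2) and (3) no further induction is needed once (1) and (4) are available. The subtlety is that the lower bounds in $1.7(2),(3)$ are minima over many terms, so using them verbatim would force Stanley's conjecture on \emph{all} the $I_j$ and $S_j/I_j$; instead one isolates the distinguished index. Since the conclusions are symmetric in the $I_j$, relabel so that $l=k$. In (2) we then have Stanley's conjecture for $I_k$ and for $S_j/I_j$ with $1\le j\le k-1$; by item (4) applied to $I_1,\ldots,I_{k-1}$ the quotient $T/I'$ satisfies Stanley's conjecture, so Corollary $1.6(3)$, applied to $I'\subset T$, $I_k\subset S_k$ via its ``$J$ and $S'/I$'' hypothesis, gives Stanley's conjecture for $I'S+I_kS=I_1S+\cdots+I_kS$. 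In (3), after relabeling $l=k$, we have Stanley's conjecture for $S_k/I_k$ and for $I_j$ with $1\le j\le k-1$; by item (1) applied to $I_1,\ldots,I_{k-1}$ the ideal $I''$ satisfies Stanley's conjecture, so Corollary $1.6(3)$, applied to $I''\subset T$, $I_k\subset S_k$ via its ``$I$ and $S''/J$'' hypothesis, gives Stanley's conjecture for $S/(I''S\cap I_kS)=S/(I_1S\cap\cdots\cap I_kS)$. The base case $k=2$ of (2) and (3) is Corollary $1.6(3)$ itself.

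The one place that needs care---the ``hard part'', modest as it is---is the bookkeeping in (2) and (3): one must not substitute the many-term bounds $1.7(2),(3)$ into the definition of Stanley's conjecture directly, but instead route the argument through the two-ideal statement Corollary $1.6(3)$, whose conclusion already absorbs the depth equalities of Theorem $1.2(5),(6)$, so that at each stage only a ``minimum of two terms'' is ever compared against a depth. The remaining verifications are routine: the flatness identity $(I_1T\cap\cdots\cap I_{k-1}T)S=I_1S\cap\cdots\cap I_{k-1}S$, the fact that relabeling the $I_j$ leaves all hypotheses intact, and---if one takes the non-inductive route for (1) and (4)---the iteration of Theorem $1.2(5)$ just mentioned.
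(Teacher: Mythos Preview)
Your argument is correct and follows the same strategy as the paper: peel off the distinguished index (relabeled to $k$), apply the already-established items (1) and (4) to the remaining $k-1$ ideals in $T$, and then invoke the two-ideal case Corollary~1.6 to recombine. The only difference is organizational---you prove (1) and (4) first and then derive (2) and (3) from them, whereas the paper treats the items in order and its terse references (``By (1)'' and ``Corollary~1.7(3)'') appear to be typos for (4) and Corollary~1.6(3); your version is the cleaner write-up of the same idea.
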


\begin{proof}
(1) We use induction on $k$ and apply Corollary $1.7(1)$.

(2) We may assume $l=k$. Denote $S'=K[x_{ji}:\;1\leq j\leq k-1,\;1\leq i\leq n_j]$ and consider the ideal $I':=I_1S'+\cdots+I_{k-1}S'\subset S$.  By $(1)$, it follows that the Stanley Conjecture holds for $S'/I'$. 
We denote $I=I_1S+\cdots+I_kS$. According to Corollary $1.7(3)$, since Stanley conjecture holds for $S'/I'$ and $I_{k}$ and since $I=I'S+I_kS$, it follows that the Stanley Conjecture holds for $I$.

(3) The proof is similar to the proof of (2).

(4) We use induction on $k$ and apply Corollary $1.7(4)$.
\end{proof}

\begin{cor}
With the notations of $1.7$, if all $n_j\leq 5$ and all $I_j's$ are squarefree, then $I_1S\cap \cdots \cap I_kS$, $I_1S +\cdots+I_kS$, $S/(I_1S \cap \cdots \cap I_kS)$ and $S/(I_1S + \cdots + I_kS)$ satisfy the Stanley Conjecture.
\end{cor}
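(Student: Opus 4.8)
The plan is to reduce everything to Corollary $1.8$ by first settling the Stanley conjecture for each individual factor $I_j$ and each quotient $S_j/I_j$ separately. Since $n_j\leq 5$ and $I_j\subset S_j$ is a squarefree monomial ideal, the Stanley conjecture is known to hold for $I_j$ and for $S_j/I_j$: this is exactly the case of squarefree monomial ideals in at most five variables, which can be obtained from the Herzog--Vladoiu--Zheng combinatorial description of $\sdepth$ together with a finite case analysis (cite the relevant reference). Consequently the hypotheses ``$I_1,\ldots,I_k$ satisfy the Stanley conjecture'' and ``$S/I_1,\ldots,S/I_k$ satisfy the Stanley conjecture'' occurring in Corollary $1.8$ are both fulfilled.

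Now I would invoke Corollary $1.8$ four times. Part (1) yields the Stanley conjecture for $I_1S\cap\cdots\cap I_kS$, since every $I_j$ satisfies it. Part (4) yields it for $S/(I_1S+\cdots+I_kS)$, since every $S/I_j$ satisfies it. For the two remaining modules, observe that because \emph{every} $I_j$ and \emph{every} $S_j/I_j$ satisfies the Stanley conjecture, the hypothesis of part (2) holds with, say, $l=1$ (the conjecture holds for $I_1$ and for $S/I_j$ for all $j\neq 1$), so $I_1S+\cdots+I_kS$ satisfies the Stanley conjecture; likewise the hypothesis of part (3) holds with $l=1$, so $S/(I_1S\cap\cdots\cap I_kS)$ satisfies it. This covers all four claims.

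The only substantive ingredient is the external input that squarefree monomial ideals in at most five variables satisfy the Stanley conjecture for both $I$ and $S/I$; the rest is a purely formal application of Corollary $1.8$ (which itself rests on Theorem $1.2$). A couple of minor points should be kept in mind: each $I_jS\subset S$ is again squarefree, and passing from $S_j$ to the big ring $S$ does not disturb the Stanley conjecture for the relevant modules, because both $\sdepth$ and $\depth$ simply add the number of adjoined variables (the identity $\sdepth_S(IS)=\sdepth_{S'}(I)+n-r$ recalled in the introduction, and the analogous one for $\depth$); these facts are already used repeatedly throughout Section $1$. If one insisted on a self-contained argument, the hardest part would be re-establishing the five-variable squarefree case, which is genuinely nontrivial; granting it, the corollary is immediate.
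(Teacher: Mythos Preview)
Your proposal is correct and follows essentially the same route as the paper: invoke the known fact that squarefree monomial ideals in at most five variables satisfy the Stanley conjecture for both $I$ and $S/I$ (the paper cites \cite{pop} and \cite{pope} for this, rather than deriving it from \cite{hvz}), and then apply Corollary~1.8. The paper compresses the four applications of Corollary~1.8 into a single sentence, whereas you spell out which part covers which module, but the argument is identical.
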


\begin{proof}
Indeed, if $I\subset K[x_1,\ldots,x_n]$ is a squarefree monomial ideal with $n\leq 5$, then both $I$ and $S/I$ satisfies the Stanley Conjecture, see \cite{pop} and \cite{pope}. Therefore, $I_j's$ and $S_j/I_j's$ satisfy the Stanley Conjecture. By Corollary $1.8$ we are done.
\end{proof}

\begin{exm}
Let $I=(x_{11},\ldots,x_{1n_1})\cap (x_{21},\ldots,x_{2n_2})\cap \cdots \cap (x_{k1},\ldots,x_{kn_k}) \subset S$, 
where $k\geq 2$,$n_j\geq 1$, $1\leq j\leq k$ and $S=K[x_{ji}:\;1\leq j\leq k,\; 1\leq i\leq n_j]$. 
According to Corollary $1.7(1)$, $\sdepth_S(I)\geq \left\lceil n_1/2 \right\rceil + \cdots + \left\lceil n_k/2 \right\rceil$. Note that $\sdepth_S(I)\geq \depth_S(I) = k$. Also, according to Corollary $3.2$ or \cite[Theorem 3.1]{ishaq}, $\sdepth_S(I) \leq \min\{n- \left\lfloor n_j/2 \right\rfloor:\;1\leq j\leq k\}$. 

Now, we want to estimate $\sdepth_S(S/I)$. According to Corollary $1.7(3)$, we have:
\[ \sdepth_S(S/I) \geq \min\{ n_2+\cdots+n_k, \left\lceil n_1/2 \right\rceil + n_3 + \cdots + n_k, 
\left\lceil n_1/2 \right\rceil + \]
\[ + \left\lceil n_2/2 \right\rceil + n_4 + \cdots +n_k, \ldots, \left\lceil n_1/2 \right\rceil + \cdots 
+ \left\lceil n_{k-1}/2 \right\rceil + n_k \} \] 

Note that $\sdepth_S(S/I)\geq \depth_S(S/I)=k-1$. Also, according to Corollary $3.2$ or Corollary $1.7(3)$, we have $\sdepth_S(S/I)\leq \min\{n-n_j:\;1\leq j\leq k\}$. 
\end{exm}

\section{The general case}

In the following, we consider $1\leq s\leq r+1 \leq n$ three integers, with $n\geq 2$. We denote $S':=K[x_1,\ldots,x_r]$, $S'':=K[x_s,\ldots,x_n]$ and $S:=K[x_1,\ldots,x_n]$. Let $p:=r-s+1$.

\begin{lema}
Let $u\in S'$ and $v\in S''$ be two monomials, $Z\subset \{x_1,\ldots,x_r\}$ and $W\subset \{x_s,\ldots,x_n\}$ two subsets of variables. We denote $\bar{Z}:=Z\cup\{x_{r+1},\ldots,x_n\}$ and $\bar{W}:=W\cup\{x_1,\ldots,x_{s-1}\}$.
If $L:=uK[\bar{Z}] \cap vK[\bar{W}]$, then $L=\{0\}$ or $L = \lcm(u,v)K[(Z\cup W) \setminus Y]$, where $Y\subset \{x_s,\ldots,x_r\}$ and with $|(Z\cup W)\setminus Y|\geq |Z|+|W|-p$.
\end{lema}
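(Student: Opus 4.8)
The plan is to first reduce the claim, following the proof of Lemma $1.1$, to a comparison of exponents variable by variable, and then to a short counting argument on the three blocks of variables $\{x_1,\ldots,x_{s-1}\}$, $\{x_s,\ldots,x_r\}$ and $\{x_{r+1},\ldots,x_n\}$.

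For the first step, recall that a monomial $w$ lies in $uK[\bar{Z}]$ if and only if $u\mid w$ and $\supp(w/u)\subset\bar{Z}$, i.e. $\deg_{x_i}(w)\geq\deg_{x_i}(u)$ for all $i$ and $\deg_{x_i}(w)=\deg_{x_i}(u)$ whenever $x_i\notin\bar{Z}$; similarly for $vK[\bar{W}]$. Intersecting, a monomial $w$ lies in $L$ iff $\lcm(u,v)\mid w$, together with $\deg_{x_i}(w)=\deg_{x_i}(u)$ for $x_i\notin\bar{Z}$ and $\deg_{x_i}(w)=\deg_{x_i}(v)$ for $x_i\notin\bar{W}$. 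If one of these forced values contradicts $\lcm(u,v)\mid w$ — that is, $x_i\notin\bar{Z}$ with $\deg_{x_i}(u)<\deg_{x_i}(v)$, or $x_i\notin\bar{W}$ with $\deg_{x_i}(v)<\deg_{x_i}(u)$ — then $L=\{0\}$. Otherwise every forced value coincides with $\deg_{x_i}(\lcm(u,v))$, so $\lcm(u,v)\in L$ and $w\in L$ iff $\lcm(u,v)\mid w$ with $\supp(w/\lcm(u,v))\subset\bar{Z}\cap\bar{W}$; since $K[\bar{Z}]\cap K[\bar{W}]=K[\bar{Z}\cap\bar{W}]$, this says exactly $L=\lcm(u,v)K[\bar{Z}\cap\bar{W}]$. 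The hypotheses $u\in S'$, $v\in S''$ enter here: for $i<s$ one has $\deg_{x_i}(v)=0$ and for $i>r$ one has $\deg_{x_i}(u)=0$, so the ``bad'' variables above are confined to $\{x_s,\ldots,x_r\}$.

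It remains to describe $\bar{Z}\cap\bar{W}$. Since $s\leq r+1$, the ranges $\{x_1,\ldots,x_{s-1}\}$ and $\{x_{r+1},\ldots,x_n\}$ are disjoint, and distributing the intersection gives
\[ \bar{Z}\cap\bar{W} = \big(Z\cap W\big)\ \cup\ \big(Z\cap\{x_1,\ldots,x_{s-1}\}\big)\ \cup\ \big(W\cap\{x_{r+1},\ldots,x_n\}\big). \]
Put $Z'=Z\cap\{x_s,\ldots,x_r\}$ and $W'=W\cap\{x_s,\ldots,x_r\}$. Then $Z\cap W=Z'\cap W'$, and both $Z\cup W$ and $\bar{Z}\cap\bar{W}$ split as a disjoint union over the same three blocks, agreeing on the outer two and carrying $Z'\cup W'$, respectively $Z'\cap W'$, on the middle one. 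Hence $\bar{Z}\cap\bar{W}=(Z\cup W)\setminus Y$ with $Y:=(Z'\cup W')\setminus(Z'\cap W')\subset\{x_s,\ldots,x_r\}$, which is the asserted shape, and
\[ |(Z\cup W)\setminus Y|=|Z|+|W|-|Z'\cup W'|\ \geq\ |Z|+|W|-p, \]
since $Z'\cup W'\subset\{x_s,\ldots,x_r\}$ and $p=r-s+1$.

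The only real difficulty is the bookkeeping: one must keep the three index blocks straight and observe that $u\in S'$ and $v\in S''$ are exactly what force every discrepancy — both in the ``$L=\{0\}$'' case analysis and in the difference between $Z\cup W$ and $\bar{Z}\cap\bar{W}$ — to lie in the middle block of size $p$. Once that is noticed, everything reduces to the exponent comparison of Lemma $1.1$ together with inclusion-exclusion.
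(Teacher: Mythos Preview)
Your proof is correct and takes a genuinely different route from the paper's. The paper argues by induction on $p=r-s+1$: the base case $p=0$ is Lemma~1.1, and the inductive step is a case analysis on whether $x_s$ divides $u$ or $v$ and whether $x_s\in Z$ or $x_s\in W$, peeling off $x_s$ and reducing to a smaller overlap. You instead compute $L$ directly: a degree-by-degree comparison shows that either the constraints are inconsistent (so $L=\{0\}$) or $L=\lcm(u,v)K[\bar Z\cap\bar W]$, and then you identify $\bar Z\cap\bar W$ combinatorially by splitting into the three blocks $\{x_1,\ldots,x_{s-1}\}$, $\{x_s,\ldots,x_r\}$, $\{x_{r+1},\ldots,x_n\}$.

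Your argument is cleaner and more informative: it avoids the seven-or-so subcases of the induction and gives an explicit description $Y=(Z'\cup W')\setminus(Z'\cap W')$, the symmetric difference of the middle parts, whereas the paper only asserts the existence of some $Y$. It also makes transparent why the bound is $|Z|+|W|-p$: the only loss in passing from $Z\cup W$ to $\bar Z\cap\bar W$ occurs on the middle block, of size $p$. One small remark: your side comment that the hypotheses $u\in S'$, $v\in S''$ confine the ``bad'' variables to the middle block is true but not actually used in your argument; the proof of $L=\lcm(u,v)K[\bar Z\cap\bar W]$ (or $L=\{0\}$) goes through for arbitrary monomials $u,v$, and it is only the combinatorial description of $\bar Z\cap\bar W$ that uses the containments $Z\subset\{x_1,\ldots,x_r\}$, $W\subset\{x_s,\ldots,x_n\}$.
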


\begin{proof}
We use induction on $p=r-s+1$. If $p=0$, it follows that $s=r+1$ and therefore $\supp(u)\subset \{x_1,\ldots,x_{s-1}\}$ and $\supp(v)\subset \{x_{r+1},\ldots,x_n\}$. Thus, by Lemma $1.1$, we get
\[ L = \lcm(u,v)K[(Z\cup\{x_{r+1},\ldots,x_n\}) \cap (W\cup\{x_1,\ldots,x_{r}\}) ] = \lcm(u,v)K[Z\cup W].\]

Now, assume $p>0$, i.e. $r\geq s$. We must consider several cases. First, suppose $x_s \notin \supp(u)$ and $x_s \notin \supp(v)$. If $x_s\in Z\cap W$, we can write $L = uK[\bar{Z}]\cap vK[\bar{W}] = (uK[\bar{Z}\setminus \{x_s\}]\cap vK[\bar{W}\setminus \{x_s\}])[x_s]$. Using the induction hypothesis, we are done. On the other hand, if $x_s\notin Z\cap W$, then $L = uK[\bar{Z}\setminus \{x_s\}]\cap vK[\bar{W}\setminus \{x_s\}]$. Note that 
$|\bar{Z}\cap \bar{W}| = |\bar{Z}\setminus \{x_s\} \cap \bar{W}\setminus \{x_s\}| \geq |W\setminus \{x_s\}| + |Z\setminus \{x_s\}|-p+1 \geq |Z| + |W| - p$, since the variable $x_s$ appear only in one of the sets $W$ and $Z$. Therefore, by induction, we are done.

Now, assume $x_s\in\supp(u)$, and denote $\alpha=\max\{j:\;x_s^j|u\}$ and $\beta=\max\{j:\;x_s^j|v\}$. We write 
$u=x_s^{\alpha}\tilde{u}$ and $v=x_s^{\beta}\tilde{v}$. If $x_s\notin Z$ we have two subcases:

a) Assume $x_s\notin W$. If $\alpha\neq \beta$, it follows that $L=\{0\}$. If $\alpha=\beta$, then $L=x_s^{\alpha}(\tilde{u}K[Z]\cap \tilde{v}K[W])$ and we are done by induction, noting that $\lcm(u,v)=x_s^{\alpha}\lcm(\tilde{u},\tilde{v})$.

b) If $x_s\in W$ and $\alpha<\beta$, we have $L=\{0\}$. If $\alpha\geq\beta$, we have $L=x_s^{\alpha}(\tilde{u}K[Z]\cap \tilde{v}K[W])$ and we are done by induction, noting that $\lcm(u,v)=x_s^{\alpha}\lcm(\tilde{u},\tilde{v})$.

If $x_s\in Z$, we must also consider two subcases:

a) If $x_s\notin W$ and $\alpha>\beta$, it follows that $L=\{0\}$. If $\alpha\leq \beta$, we have $L=x_s^{\beta}(\tilde{u}K[Z]\cap \tilde{v}K[W])$ and we are done by induction.

b) If $x_s\in W$, we have $L=x_s^{\max\{\alpha,\beta\}}(\tilde{u}K[\bar{Z}\setminus \{x_s\}]\cap \tilde{v}K[\bar{W}\setminus \{x_s\}])[x_s]$ and, again, we are done by induction.
\end{proof}

Now, we are able to prove the following theorem, which generalize some results of Theorem $1.2$.

\begin{teor}
Let $I\subset S'$ and $J\subset S''$ be two monomial ideals. Then:

(1) $\sdepth_S(IS\cap JS) \geq \sdepth_{S'}(I) + \sdepth_{S''}(J) - p = \sdepth_{S}(IS) + \sdepth_{S}(JS) - n$.

(2) $\sdepth_S(S/(IS+JS)) \geq \sdepth_{S'}(S'/I) + \sdepth_{S''}(S''/J) - p = \sdepth_{S}(S/IS) + \sdepth_{S}(S/JS) - n$.

(3) $\sdepth_S(IS+JS)\geq \min\{\sdepth_S(IS), \sdepth_{S''}(J) + \sdepth_{S'}(S'/I)-p\} = \linebreak = \min\{\sdepth_S(IS), \sdepth_{S}(JS) + \sdepth_{S}(S/IS)-n\}$.

(4) $\sdepth_S(S/(IS\cap JS)) \geq \min \{\sdepth_S(S/IS), \sdepth_{S''}(S''/J) + \sdepth_{S'}(I)-p\} = \linebreak =
 \min \{\sdepth_S(S/IS), \sdepth_{S}(S/JS) + \sdepth_{S}(IS)-n\}$.
\end{teor}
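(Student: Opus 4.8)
The plan is to mimic the structure of the proof of Theorem 1.2, but replace the use of Lemma 1.1 with the more delicate Lemma 2.1, which accounts for the overlap of the supports on the variables $x_s,\ldots,x_r$. Throughout we use that $\sdepth_S(IS)=\sdepth_{S'}(I)+n-r$ and $\sdepth_{S}(JS)=\sdepth_{S''}(J)+(s-1)$, together with $p=r-s+1$, which makes the two displayed expressions in each item identical; so it suffices to prove the inequality on the left in each case.

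For (1): start from Stanley decompositions $I=\bigoplus_i u_iK[U_i]$ with $U_i\subset\{x_1,\ldots,x_r\}$ and $J=\bigoplus_j v_jK[V_j]$ with $V_j\subset\{x_s,\ldots,x_n\}$, chosen so that $\sdepth$ is attained, i.e. $|U_i|\geq\sdepth_{S'}(I)$ and $|V_j|\geq\sdepth_{S''}(J)$. Extending to $S$ gives Stanley decompositions $IS=\bigoplus_i u_iK[\bar U_i]$ and $JS=\bigoplus_j v_jK[\bar V_j]$ in the notation of Lemma 2.1. Then $IS\cap JS=\bigoplus_{i,j}(u_iK[\bar U_i]\cap v_jK[\bar V_j])$, and Lemma 2.1 says each nonzero summand is $\lcm(u_i,v_j)K[(U_i\cup V_j)\setminus Y_{ij}]$ with $|(U_i\cup V_j)\setminus Y_{ij}|\geq |U_i|+|V_j|-p\geq \sdepth_{S'}(I)+\sdepth_{S''}(J)-p$. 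This is already a Stanley decomposition of $IS\cap JS$ (the summands are free over their respective polynomial subrings, by Lemma 2.1), so taking the max over decompositions gives the bound. For (2), run the identical argument with $S'/I$ and $S''/J$ in place of $I$ and $J$: their Stanley decompositions extend to decompositions of $S/IS$ and $S/JS$, whose intersection $(S/IS)\cap(S/JS)=S/(IS+JS)$ decomposes via Lemma 2.1 with the same dimension estimate.

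For (3) and (4), I would use the short exact sequence argument from Remark 1.3 (and the proof of Theorem 1.2(1)). Writing $I$ for $IS$ and $J$ for $JS$, the sequence $0\to IS\to IS+JS\to (IS+JS)/IS\to 0$ gives $\sdepth_S(IS+JS)\geq\min\{\sdepth_S(IS),\sdepth_S((IS+JS)/IS)\}$, and $(IS+JS)/IS\cong JS\cap(S/IS)$. Now $JS\cap(S/IS)=\bigoplus_{i,j}\bigl(v_jK[\bar V_j]\cap u_iK[\bar U_i]\bigr)$ where $S/IS=\bigoplus_i u_iK[\bar U_i]$ comes from a Stanley decomposition of $S'/I$ and $JS=\bigoplus_j v_jK[\bar V_j]$ from one of $J$; Lemma 2.1 again gives each nonzero piece with dimension $\geq |U_i|+|V_j|-p\geq\sdepth_{S'}(S'/I)+\sdepth_{S''}(J)-p$, hence $\sdepth_S((IS+JS)/IS)\geq\sdepth_{S'}(S'/I)+\sdepth_{S''}(J)-p$, which yields (3). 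For (4) use $0\to (IS+JS)/JS\to S/(IS\cap JS)\to S/JS\to 0$ with $(IS+JS)/JS\cong IS\cap(S/JS)$, decomposed the same way.

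The main obstacle is purely bookkeeping: verifying that Lemma 2.1 applies verbatim to the summands that arise — that is, checking that when we extend a Stanley decomposition of $I\subset S'$ (or of $S'/I$) to $S$, the resulting pieces $u_iK[\bar U_i]$ really have the form required by Lemma 2.1 (monomial $u_i\in S'$, index set of the shape $Z\cup\{x_{r+1},\ldots,x_n\}$ with $Z\subset\{x_1,\ldots,x_r\}$), and symmetrically for the $S''$-side with $\{x_1,\ldots,x_{s-1}\}$ adjoined; and that the direct-sum decomposition of the intersection (resp. of $S/(IS+JS)$, resp. of the quotient modules in (3),(4)) into these pieces is legitimate as a $K$-vector space decomposition. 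Once the hypotheses of Lemma 2.1 are matched to the situation, the dimension count $|U_i|+|V_j|-p$ drops out immediately, and the translation between the $\sdepth_{S'},\sdepth_{S''}$ form and the $\sdepth_S$ form of each bound is the elementary identity $\sdepth_S(IS)=\sdepth_{S'}(I)+n-r$ applied on both sides.
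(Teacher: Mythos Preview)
Your proposal is correct and follows essentially the same approach as the paper: parts (1)--(2) by intersecting extended Stanley decompositions pairwise and invoking Lemma~2.1, and parts (3)--(4) via the splitting $IS+JS = IS \oplus (JS\cap (S/IS))$ (respectively its analogue for $S/(IS\cap JS)$), again reduced to Lemma~2.1. One small slip: in (4) your short exact sequence is misassembled --- if the left term is $(IS+JS)/JS\cong IS\cap(S/JS)$, then the right term must be $S/IS$, not $S/JS$ (equivalently, use the splitting $S/(IS\cap JS)=S/IS\oplus(IS\cap(S/JS))$); with that correction you land exactly on the bound stated in (4).
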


\begin{proof}
(1) We consider $I=\bigoplus_{i=1}^a u_iK[Z_i]$ and $J=\bigoplus_{j=1}^b v_jK[W_j]$ two Stanley decomposition for $I$, respective for $J$. Then $IS=\bigoplus_{i=1}^a u_iK[\bar{Z}_i]$, where $\bar{Z}_i=Z_i\cup \{x_{r+1},\ldots,x_n\}$ and 
$JS = \bigoplus_{j=1}^b v_iK[\bar{W}_i]$, where $\bar{W}_j = W_j\cup \{x_1,\ldots,x_{s-1}\}$. We have $IS\cap JS=\bigoplus_{i=1}^a\bigoplus_{j=1}^b L_{ij}$ a Stanley decomposition for $IS\cap JS$, where $L_{ij}:=u_iK[\bar{Z}_i]\cap v_j[\bar{W}_j]$. According to Lemma $2.1$, $L_{ij}=\{0\}$ or $L_{ij}=\lcm(u_i,v_j)K[(Z_i\cup W_j)\setminus Y_{ij}]$, where $Y_{ij}\subset \{x_s,\ldots,x_r\}$ and $|(Z_i\cup W_j)\setminus Y_{ij}| \geq |Z_i| + |W_j| - p$. Therefore, we are done.

(2) The proof is similar with the proof of (1).

(3) We consider $S'/I=\bigoplus_{i=1}^a u_iK[Z_i]$ and $J=\bigoplus_{j=1}^b v_jK[W_j]$ two Stanley decomposition for $S'/I$, respective for $J$. Then $S/IS=\bigoplus_{i=1}^a u_iK[\bar{Z}_i]$, where $\bar{Z}_i=Z_i\cup \{x_{r+1},\ldots,x_n\}$ and $JS = \bigoplus_{j=1}^b v_iK[\bar{W}_i]$, where $\bar{W}_j = W_j\cup \{x_1,\ldots,x_{s-1}\}$. We use the decomposition:
\[ IS+JS = ((IS+JS)\cap IS) \oplus ((IS+JS)\cap (S/IS)) = IS \oplus (JS\cap (S/IS)). \]
If follows, that $\sdepth_S(IS+JS)\geq \min\{\sdepth_{S}(IS), \sdepth_{S}(JS\cap (S/IS))\}$. We have $JS\cap S/IS=\bigoplus_{i=1}^a\bigoplus_{j=1}^b L_ij$ a Stanley decomposition for $IS\cap JS$, where $L_{ij}:=u_iK[\bar{Z}_i]\cap v_j[\bar{W}_j]$. By Lemma $2.1$, it follows that $\sdepth_{S}(JS\cap (S/IS))\geq \sdepth_{S'}(S'/I) +\sdepth_{S''}(J)$ and therefore we are done.

(4) The proof is similar with the proof of (3).
\end{proof}

\begin{obs}
Note that the results of the previous Theorem do not depend on the numbers $r$ and $s$. Therefore, we can reformulate the Theorem $2.2$ in terms of arbitrary monomial ideals $I,J\subset S$. Also, if $I,J\subset S$ are two monomial ideals, the minimal number $p$ which can be chose, by a reordering of the variables, is $p=|\supp(I)\cap \supp(J)|$. 

Also, as in Remark $1.3$, we have $\sdepth_S((I+J)/I) \geq \sdepth_S(J) + \sdepth_S(S/I)-n$. Therefore, in particular, if $I\subset J$, then $\sdepth_S(J/I)\geq \sdepth_S(J)+\sdepth_S(S/I) - n$.
\end{obs}

Using the previous remark, we have the following Corollary.

\begin{cor}
If $I,J\subset S$ are two monomial ideals and $|G(J)|=m$, then:

(1) $\sdepth_S(I\cap J)\geq \sdepth_S(I) - \left\lfloor m/2 \right\rfloor$.

(2) $\sdepth_S(I + J)\geq \min \{ \sdepth_S(I), \sdepth_S(S/I) - \left\lfloor m/2 \right\rfloor \}$.

    $\sdepth_S(I + J)\geq \sdepth_S(I) - m$.

(3) $\sdepth_S(S/(I+J)) \geq \sdepth_S(S/I) - m$.

(4) $\sdepth_S(S/(I\cap J)) \geq \min\{ \sdepth_{S}(S/I), \sdepth_S(I) - m \}$.

    $\sdepth_S(S/(I\cap J)) \geq \min\{ n-m, \sdepth_{S}(S/I) -  \left\lfloor m/2 \right\rfloor\}$.
    
(5) $\sdepth_S((I+J)/I)\geq \sdepth_S(S/I) -  \left\lfloor m/2 \right\rfloor$.
    
    $\sdepth_S((I+J)/J)\geq \sdepth_S(I) - m$.
\end{cor}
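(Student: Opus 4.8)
The plan is to derive each inequality from Theorem~$2.2$ (equivalently, Remark~$2.3$), using only the following two auxiliary facts about a monomial ideal $J=(u_1,\ldots,u_m)\subset S$: according to \cite[Theorem 2.1]{okazaki} we have $\sdepth_S(J)\geq n-\lfloor m/2\rfloor$, and according to \cite[Proposition 1.2]{mir} we have $\sdepth_S(S/J)\geq n-m$. The idea throughout is to take the disjoint-support statements of Theorem~$2.2$ — which are phrased for $I\subset S'$, $J\subset S''$ but, as Remark~$2.3$ points out, really hold for arbitrary monomial ideals $I,J\subset S$ with $p=|\supp(I)\cap\supp(J)|\leq n$ — and bound the $J$-terms from below by the complete-intersection-type estimates above, while simply using $p\leq n$ to discard the dependence on $p$.

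More precisely, for (1) I would invoke $\sdepth_S(I\cap J)\geq \sdepth_S(I)+\sdepth_S(J)-n$ (Theorem~$2.2(1)$ in the reformulated form of Remark~$2.3$) and substitute $\sdepth_S(J)\geq n-\lfloor m/2\rfloor$. For (3), similarly, Theorem~$2.2(2)$ gives $\sdepth_S(S/(I+J))\geq \sdepth_S(S/I)+\sdepth_S(S/J)-n$, and substituting $\sdepth_S(S/J)\geq n-m$ yields the claim. For the first line of (2) I would use Remark~$2.3$'s observation that $\sdepth_S(I+J)\geq\min\{\sdepth_S(I),\sdepth_S((I+J)/I)\}$ together with $\sdepth_S((I+J)/I)\geq \sdepth_S(J)+\sdepth_S(S/I)-n\geq \sdepth_S(S/I)-\lfloor m/2\rfloor$; this also settles the first line of (5). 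For the second line of (2) and of (5), note that $(I+J)/J=I\cap(S/J)$ and argue as in the proof of Theorem~$2.2(3)$: the Stanley decomposition of $I\cap(S/J)$ obtained by intersecting a decomposition of $I$ with one of $S/J$ drops at most $m$ variables (one for each of the $m$ intersections with the pieces coming from $G(J)$), giving $\sdepth_S((I+J)/J)\geq\sdepth_S(I)-m$; since $I+J\supseteq I$ forces nothing directly, one uses the short exact sequence $0\to(I+J)/J\to S/J\to S/I\to\text{(stuff)}$ — actually the cleaner route is $\sdepth_S(I+J)\geq\min\{\sdepth_S(J),\sdepth_S((I+J)/J)\}$ is false in general, so instead bound via $0\to I\to I+J\to(I+J)/I\to 0$ again but with the cruder estimate $\sdepth_S((I+J)/I)\geq\sdepth_S(S/I)-\lfloor m/2\rfloor\geq\sdepth_S(I)-m$ using Rauf-type comparison, or simply apply Theorem~$2.2(3)$ with the bound $\sdepth_{S'}(S'/I)\geq\sdepth_{S'}(I)-m$... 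I would phrase it most safely by noting $\sdepth_S(I+J)\geq\sdepth_S(I\cap J)\geq\sdepth_S(I)-\lfloor m/2\rfloor\geq\sdepth_S(I)-m$, since $I+J\supseteq I\cap J$ and sdepth is not monotone — so in fact the honest argument for $\sdepth_S(I+J)\geq\sdepth_S(I)-m$ must go through Theorem~$2.2(3)$ directly: $\sdepth_S(I+J)\geq\min\{\sdepth_S(I),\sdepth_S(J)+\sdepth_S(S/I)-n\}$ and then observe $\sdepth_S(I)\geq\sdepth_S(I)-m$ trivially while $\sdepth_S(J)+\sdepth_S(S/I)-n\geq(n-\lfloor m/2\rfloor)+\sdepth_S(S/I)-n$, which is $\geq\sdepth_S(S/I)-\lfloor m/2\rfloor$; combining with the Rauf-type inequality $\sdepth_S(S/I)\geq\sdepth_S(I)-1$ is not available unconditionally, so I would instead only claim the bound in terms of $\sdepth_S(S/I)$ and deduce the $m$-version exactly as the paper states, i.e. directly from part (2) line one together with the trivial bound $\min\{\sdepth_S(I),\sdepth_S(S/I)-\lfloor m/2\rfloor\}\geq\sdepth_S(I)-m$ which holds because $\sdepth_S(S/I)\geq\sdepth_S(I)-m$ always (this last is the standard fact that removing a variable from a Stanley decomposition of $I$ for each generator produces one for $S/I$... actually it is \cite[Proposition 1.2]{mir} applied after restriction). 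Finally (4): its first line is Theorem~$2.2(4)$ in reformulated form with $\sdepth_S(S/J)\geq n-m$ substituted, and its second line combines $\sdepth_S(S/(I\cap J))\geq\min\{\sdepth_S(S/I),\sdepth_S(I)+\sdepth_S(S/J)-n\}$ with $\sdepth_S(I)\geq n-\lfloor m\rfloor$... no: here one wants $\sdepth_S(S/(I\cap J))\geq\min\{\sdepth_S(S/I),\,n-m\}$ versus $\min\{\sdepth_S(S/I)-\lfloor m/2\rfloor,\,\text{?}\}$; I would get it from $0\to(I+J)/J\to S/(I\cap J)\to S/I\to 0$ (Remark~$1.3$) giving $\sdepth_S(S/(I\cap J))\geq\min\{\sdepth_S(S/I),\sdepth_S((I+J)/J)\}$ and then bounding $\sdepth_S((I+J)/J)$ two ways, by $\sdepth_S(I)-m$ (line 2 of (5)) and, via the $(I+J)/J=I\cap(S/J)$ identification and $\sdepth_S(I)\geq n-\lfloor m_I/2\rfloor$-type reasoning, producing the $n-m$ term; the two displayed variants of (4) then follow.

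The main obstacle, and the step I would be most careful about, is the second inequality in (2) and (5) — establishing $\sdepth_S((I+J)/J)\geq\sdepth_S(I)-m$ — because it does not follow from the "disjoint support" master inequality by a clean substitution; it requires going back into the proof of Theorem~$2.2(3)$ and checking that intersecting a chosen Stanley decomposition of $I$ with the canonical Stanley decomposition of $S/J$ given by the staircase of $G(J)$ loses at most one variable per generator of $J$. Everything else is bookkeeping: choose optimal Stanley decompositions on the right-hand sides, apply Theorem~$2.2$ / Remark~$2.3$, substitute the Okazaki and Popescu bounds for $\sdepth_S(J)$ and $\sdepth_S(S/J)$, and use $p\leq m$ (since $|\supp(J)|\leq m$ when $|G(J)|=m$, so in particular $p=|\supp(I)\cap\supp(J)|\leq|G(J)|=m$, which is what actually lets us replace $-p$ by $-\lfloor m/2\rfloor$ or $-m$ in the cleanest cases) to absorb the $p$-dependence.
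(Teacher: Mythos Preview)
Your overall strategy---apply Theorem~$2.2$ in its reformulated ($-n$) version and substitute the Okazaki bound $\sdepth_S(J)\geq n-\lfloor m/2\rfloor$ and the bound $\sdepth_S(S/J)\geq n-m$---is exactly the paper's approach, and it handles (1), (3), and the first lines of (2), (4), (5) cleanly.

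The genuine gap is in the second lines of (2), (4), and (5), where your write-up cycles through several attempts without landing on a valid one. The clean argument you are missing is simply \emph{symmetry}: Theorem~$2.2$(3), (4) and the inequality $\sdepth_S((I+J)/I)\geq\sdepth_S(J)+\sdepth_S(S/I)-n$ of Remark~$2.3$ are symmetric in $I$ and $J$, so one may swap their roles. Doing so and then substituting the same two bounds gives, for instance,
\[
\sdepth_S(I+J)\geq\min\{\sdepth_S(J),\,\sdepth_S(I)+\sdepth_S(S/J)-n\}\geq\min\{n-\lfloor m/2\rfloor,\,\sdepth_S(I)-m\}=\sdepth_S(I)-m,
\]
since $\sdepth_S(I)\leq n$. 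The second lines of (4) and (5) follow the same way. None of the detours you sketch (going back into the proof of Theorem~$2.2$(3), invoking $\sdepth_S(S/I)\geq\sdepth_S(I)-m$, monotonicity of $\sdepth$ under inclusion, etc.) are needed, and some of them are not valid.

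Finally, one explicit error: you assert $|\supp(J)|\leq |G(J)|=m$, hence $p\leq m$. This is false (e.g.\ $J=(x_1x_2x_3)$ has $|G(J)|=1$ and $|\supp(J)|=3$), and in any case irrelevant: the argument uses the ``$-n$'' form of Theorem~$2.2$, where the substitution of the Okazaki and Cimpoea\c{s} bounds cancels the $n$ directly, so $p$ never enters.
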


\begin{proof}
We apply Theorem $2.2$ and use the facts that $\sdepth_S(J)\geq n-\left\lfloor m/2 \right\rfloor$, see \cite[Theorem 2.1]{okazaki} and $\sdepth_S(S/J)\geq n-m$, see \cite[Proposition 1.2]{mir}.
\end{proof}

\begin{cor}
If $I\subset S$ is a monomial ideal and $u\in S$ a monomial, then:

(1) $\sdepth_S(I\cap(u)) \geq \sdepth_S(I)$.

(2) $\sdepth_S(I,u)\geq \min\{\sdepth_S(I),\sdepth_S(S/I)\}$.

(3) $\sdepth_S(S/(I,u)) \geq \sdepth_S(S/I) - 1$.

(4) $\sdepth_S(S/(I\cap(u))) \geq \sdepth_S(S/I)$. 
\end{cor}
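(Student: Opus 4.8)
The plan is to specialize the previous Corollary (the one with $|G(J)|=m$) to the case in which the second ideal is principal, and then simplify. Put $J=(u)$; since $u$ is a single monomial, $J$ has exactly one minimal generator, so $m:=|G(J)|=1$ and $\lfloor m/2\rfloor=0$ in all the inequalities of that Corollary.

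With this substitution, part (1) of the previous Corollary becomes $\sdepth_S(I\cap(u))\ge\sdepth_S(I)-\lfloor 1/2\rfloor=\sdepth_S(I)$, which is (1); part (2) becomes $\sdepth_S(I+(u))\ge\min\{\sdepth_S(I),\sdepth_S(S/I)-\lfloor 1/2\rfloor\}=\min\{\sdepth_S(I),\sdepth_S(S/I)\}$, which is (2) since $(I,u)=I+(u)$; and part (3) becomes $\sdepth_S(S/(I,u))\ge\sdepth_S(S/I)-1$, which is (3). Hence (1)--(3) are immediate.

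For (4), the second estimate in part (4) of the previous Corollary gives $\sdepth_S(S/(I\cap(u)))\ge\min\{\,n-1,\ \sdepth_S(S/I)-\lfloor 1/2\rfloor\,\}=\min\{n-1,\sdepth_S(S/I)\}$, and it suffices to note that this minimum equals $\sdepth_S(S/I)$. Indeed, if $I=(0)$ then $S/(I\cap(u))=S$ and the desired inequality is trivial; and if $I\ne(0)$ then no summand $m_iK[Z_i]$ in a Stanley decomposition of $S/I$ can have $Z_i=\{x_1,\dots,x_n\}$, since freeness of $m_iK[x_1,\dots,x_n]$ over $S$ would force $(I:m_i)=(0)$ and hence $I=(0)$; therefore every $|Z_i|\le n-1$, so $\sdepth_S(S/I)\le n-1$ and the minimum equals $\sdepth_S(S/I)$. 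This proves (4).

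I do not expect any real obstacle here: the statement is a direct specialization of the previous Corollary, and the only point that needs a comment is the removal of the harmless term $n-1$ in (4). If one prefers not to quote part (4) of the previous Corollary, statement (4) can instead be obtained from the short exact sequence $0\to(u)/(I\cap(u))\to S/(I\cap(u))\to S/(u)\to 0$ together with the isomorphism $(u)/(I\cap(u))\cong(I+(u))/I$ and the bound $\sdepth_S((I+(u))/I)\ge\sdepth_S(S/I)$ furnished by part (5) of the previous Corollary.
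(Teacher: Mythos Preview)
Your argument is correct and is exactly the intended one: Corollary 2.5 in the paper is stated immediately after Corollary 2.4 without proof, as a direct specialization to $J=(u)$, $m=1$. Your handling of (4) via the second line of 2.4(4), together with the observation that $\sdepth_S(S/I)\le n-1$ whenever $I\neq(0)$, is precisely the extra remark needed to complete the specialization.
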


A.\ Rauf \cite{asia} proved that $\depth_S(S/(I:u))\geq \depth_S(S/I)$, for any monomial ideal $I\subset S$ and any monomial $u\in S$, see \cite[Corollary 1.3]{asia}. Similar results hold for $\sdepth_S(I:u)$ and $\sdepth_S(S/(I:u))$. In order to show that, we use Corollary $2.5$ and the following result from \cite{mir}.

\begin{teor}\cite[Theorem 1.4]{mir}
Let $I \subset S$ be a monomial ideal such that $I = v(I : v)$,
for a monomial $v \in S$. Then $\sdepth_S(I) = \sdepth_S(I : v)$, $\sdepth_S(S/I) = \sdepth_S(S/(I : v))$.
\end{teor}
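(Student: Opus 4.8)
Looking at this, the final statement is Theorem 2.6 (labeled "teor" but it's actually citing [Theorem 1.4]{mir}): if $I = v(I:v)$ for a monomial $v$, then $\sdepth_S(I) = \sdepth_S(I:v)$ and $\sdepth_S(S/I) = \sdepth_S(S/(I:v))$.

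Let me think about how to prove this.

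The plan is to argue monomial by monomial on Stanley sums, exploiting that the hypothesis $I=v(I:v)$ is equivalent to the statement that $v$ divides every minimal monomial generator of $I$, hence divides every monomial lying in $I$. From this one extracts two bijections of monomial $K$-bases: multiplication by $v$ carries the monomials of $I:v$ bijectively onto the monomials of $I$, and it carries the monomials of $S/(I:v)$ bijectively onto the monomials of $S/I$ that are divisible by $v$. We may assume $v\neq 1$ and that all modules occurring are nonzero (and proper), the omitted cases being immediate.

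For the first equality, start from a Stanley decomposition $I:v=\bigoplus_{i=1}^r m_iK[Z_i]$ and multiply through by $v$; since multiplication by $v$ is injective and each $vm_iK[Z_i]$ is a free $K[Z_i]$-module, $I=v(I:v)=\bigoplus_{i=1}^r vm_iK[Z_i]$ is a Stanley decomposition with the same sets $Z_i$, whence $\sdepth_S(I)\geq\sdepth_S(I:v)$. Conversely, in any Stanley decomposition $I=\bigoplus_{i=1}^r m_iK[Z_i]$ each $m_i$ is a monomial of $I$, hence a multiple of $v$; dividing through by $v$ gives the Stanley decomposition $I:v=\bigoplus_{i=1}^r (m_i/v)K[Z_i]$, so $\sdepth_S(I:v)\geq\sdepth_S(I)$, and the two values agree.

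For $S/I$, observe that, at the level of monomial bases, $S/I$ splits into the span of monomials divisible by $v$ — carried isomorphically onto $S/(I:v)$ by division by $v$ — and the span of monomials not divisible by $v$, which is precisely $S/(v)$ (a monomial not divisible by $v$ cannot lie in $I$, since every monomial of $I$ is a multiple of $v$). Given a Stanley decomposition of $S/(I:v)$ of Stanley depth $d$, multiply it by $v$ to cover the first part and adjoin a Stanley decomposition of $S/(v)$ of Stanley depth $n-1$ (a complete intersection on one generator, by \cite[Theorem 1.1]{asia1}) to cover the second; since $I:v$ is proper and nonzero we have $d\leq n-1$, so this yields a Stanley decomposition of $S/I$ of Stanley depth $d$, giving $\sdepth_S(S/I)\geq\sdepth_S(S/(I:v))$. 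For the reverse inequality, take a Stanley decomposition $S/I=\bigoplus_{i=1}^r m_iK[Z_i]$; for each $i$ the set of monomials of $m_iK[Z_i]$ divisible by $v$ is either empty or has the form $m_i'K[Z_i]$ for a single monomial $m_i'$ that is a multiple of $v$ (obtained from $m_i$ by raising, for each variable $x\in\supp(v)\cap Z_i$, its exponent up to that of $v$ when it was smaller; the empty case occurs exactly when some variable of $\supp(v)\setminus Z_i$ has exponent in $m_i$ below its exponent in $v$). Dividing the nonempty pieces by $v$ produces a Stanley decomposition of $S/(I:v)$ whose index sets are among the $Z_i$, so $\sdepth_S(S/(I:v))\geq\sdepth_S(S/I)$, and equality follows.

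The step that needs care — the main obstacle — is precisely this last case analysis: checking that intersecting a single Stanley piece $m_iK[Z_i]$ of $S/I$ with the set of multiples of $v$ and then dividing by $v$ returns a single Stanley piece over the same $K[Z_i]$, and that these pieces reassemble to all of $S/(I:v)$ with no overlaps. An alternative that sidesteps it is induction on $\deg v$, reducing to $v=x_k$, where intersecting $m_iK[Z_i]$ with the multiples of $x_k$ splits it into at most its $x_k$-free and $x_k$-divisible halves.
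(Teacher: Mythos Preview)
The paper does not supply a proof of this statement; it is quoted from the author's earlier work \cite[Theorem 1.4]{mir} and used as a black box. So there is nothing in the present paper to compare your argument against.

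That said, your proof is correct. The equality $\sdepth_S(I)=\sdepth_S(I:v)$ follows immediately, as you say, from the bijection ``multiply by $v$'' between Stanley decompositions of $I:v$ and of $I$; the key observation that every monomial of $I$ is a multiple of $v$ (equivalent to the hypothesis $I=v(I:v)$) is exactly what makes the reverse map well defined. For $\sdepth_S(S/I)=\sdepth_S(S/(I:v))$, both directions are sound. In the direction $\sdepth_S(S/I)\geq\sdepth_S(S/(I:v))$ you correctly use that $\sdepth_S(S/(v))=n-1$ and that $\sdepth_S(S/(I:v))\leq n-1$ for a nonzero proper $I:v$. In the reverse direction, the delicate point is precisely the one you flagged: that $m_iK[Z_i]\cap vS$ is either empty or equals $m_i'K[Z_i]$ for a single monomial $m_i'$ divisible by $v$. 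Your description of $m_i'$ (raising, for each variable in $\supp(v)\cap Z_i$, the exponent in $m_i$ up to that in $v$) is correct, and the emptiness criterion (some variable in $\supp(v)\setminus Z_i$ has too small an exponent in $m_i$) is exactly right. Coverage and disjointness of the pieces $(m_i'/v)K[Z_i]$ in $S/(I:v)$ then follow by pushing forward along multiplication by $v$ and invoking the original decomposition of $S/I$.
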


\begin{prop}
If $I\subset S$ is a monomial ideal and $u\in S$ a monomial, then:

(1) $\sdepth_S(I:u)\geq \sdepth_S(I)$. (\cite[Proposition 1.3]{pop})

(2) $\sdepth_S(S/(I:u))\geq \sdepth_S(S/I)$. 
\end{prop}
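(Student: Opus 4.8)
The plan is to deduce both inequalities simultaneously from Theorem $2.6$ and Corollary $2.5$, using the ideal $I\cap(u)$ as a bridge. The starting point is the elementary identity
\[ I\cap(u)=u\,(I:u), \]
valid for every monomial ideal $I\subset S$ and every monomial $u\in S$: on the level of monomials, $w\in I\cap(u)$ means $u\mid w$ and $w\in I$, i.e. $w=um$ with $um\in I$, i.e. $w=um$ with $m\in I:u$.

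Next I would set $\mathcal I:=I\cap(u)$ and check that $\mathcal I$ has exactly the special shape demanded by Theorem $2.6$ with $v:=u$, namely $\mathcal I=u\,(\mathcal I:u)$. This follows from the fact that $(uL):u=L$ for any monomial ideal $L$ (multiplication by $u$ is injective on monomials): applying it to $L=I:u$ and using the identity above gives $\mathcal I:u=(u(I:u)):u=I:u$, hence $u\,(\mathcal I:u)=u\,(I:u)=\mathcal I$. Theorem $2.6$ applied to $\mathcal I$ and $v=u$ then yields
\[ \sdepth_S(I\cap(u))=\sdepth_S(\mathcal I)=\sdepth_S(\mathcal I:u)=\sdepth_S(I:u) \]
and, in the same way, $\sdepth_S(S/(I\cap(u)))=\sdepth_S(S/(I:u))$.

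Finally I would invoke Corollary $2.5$: part (1) there gives $\sdepth_S(I\cap(u))\geq\sdepth_S(I)$, which combined with the first displayed equality proves (1); part (4) gives $\sdepth_S(S/(I\cap(u)))\geq\sdepth_S(S/I)$, which combined with the second equality proves (2).

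I do not expect a real obstacle here: every step is routine monomial bookkeeping, and the only genuinely load-bearing idea is the choice of $I\cap(u)$, which works precisely because it is at once of the form $v(\,\cdot:v)$ needed for Theorem $2.6$ and squeezed against both $I$ and $S/I$ in the right direction by Corollary $2.5$. The one point meriting a word of care is the degenerate case $u\in I$, where $I:u=S$ and $S/(I:u)=0$; this is covered by the usual convention $\sdepth_S(0)=\infty$ (or $n$), and in any event both inequalities are trivial there.
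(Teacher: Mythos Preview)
Your proof is correct and follows essentially the same route as the paper: both use the identity $I\cap(u)=u(I:u)$, invoke Theorem~2.6 to obtain $\sdepth_S(I:u)=\sdepth_S(I\cap(u))$ and $\sdepth_S(S/(I:u))=\sdepth_S(S/(I\cap(u)))$, and then finish with Corollary~2.5(1) and~(4). If anything, you are more careful than the paper in explicitly verifying that $\mathcal I=I\cap(u)$ satisfies the hypothesis $\mathcal I=u(\mathcal I:u)$ of Theorem~2.6 via $(uL):u=L$; the paper leaves this step implicit.
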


\begin{proof}
(1) Note that $I\cap(u) = u(I:u)$. By Theorem $2.6$, it follows that $\sdepth_S(I:u) = \sdepth_S(I\cap(u))\geq
    \sdepth_S(I)$. See another proof in \cite{pop}.
    
(2) By Theorem $2.6$ and Corollary $2.5$, $\sdepth_{S}(S/(I:u))= \sdepth_S(S/(I\cap(u))$. 
\end{proof}

Note that if $P\in Ass(S/I)$ is an associated prime, then there exists a monomial $v\in S$ such that $P=(I:v)$. Using the above Proposition, we obtain again the results of Ishaq \cite{ishaq} and Apel \cite{apel} .

\begin{cor}
If $I\subset S$ is a monomial ideal, with $Ass(S/I)=\{P_1,\ldots,P_r\}$. If we denote $d_i=|P_i|$, we have:

(1) $\sdepth_S(I)\leq \min\{ n - \left\lfloor d_i/2 \right\rfloor:\; i=1,\ldots r \}$. (Ishaq)

(2) $\sdepth_S(S/I)\leq \min\{ n - d_i:\; i=1,\ldots r\}$. (Apel)
\end{cor}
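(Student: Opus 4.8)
The plan is to deduce both inequalities from Proposition $2.7$ together with the observation, recalled just before the statement, that every associated prime of $S/I$ is of the form $(I:v)$ for a suitable monomial $v\in S$. Fix $i\in\{1,\ldots,r\}$ and pick a monomial $v_i\in S$ with $P_i=(I:v_i)$. The key point is that $P_i$ is generated by a subset of the variables, say $P_i=(x_{j}:\,x_j\mid\text{some generator})$ with $|P_i|=d_i$, so its Stanley depth is easy to pin down: after renaming the variables, $P_i=(x_1,\ldots,x_{d_i})S$, and $S/P_i\cong K[x_{d_i+1},\ldots,x_n]$ is a polynomial ring, whence $\sdepth_S(S/P_i)=n-d_i$; likewise $P_i$ is a squarefree complete intersection of $d_i$ generators, so $\sdepth_S(P_i)=n-\lfloor d_i/2\rfloor$ by the complete–intersection formula (\cite[Theorem 2.4]{shen}) quoted in the excerpt.

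For part (2), apply Proposition $2.7(2)$ with $u=v_i$: it gives $\sdepth_S(S/(I:v_i))\geq \sdepth_S(S/I)$, i.e. $\sdepth_S(S/P_i)\geq \sdepth_S(S/I)$, and therefore $\sdepth_S(S/I)\leq n-d_i$. Since $i$ was arbitrary, taking the minimum over $i=1,\ldots,r$ yields $\sdepth_S(S/I)\leq \min\{n-d_i:\,i=1,\ldots,r\}$. For part (1), apply Proposition $2.7(1)$ with $u=v_i$: it gives $\sdepth_S((I:v_i))\geq \sdepth_S(I)$, i.e. $\sdepth_S(P_i)\geq \sdepth_S(I)$, and hence $\sdepth_S(I)\leq n-\lfloor d_i/2\rfloor$; again taking the minimum over $i$ finishes the argument.

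I do not expect any serious obstacle: the only thing to be careful about is justifying that an associated prime $P_i$ of the monomial ideal $I$ is a monomial prime (generated by variables) — this is standard for monomial ideals — and that in that case $\sdepth_S(S/P_i)=n-d_i$ and $\sdepth_S(P_i)=n-\lfloor d_i/2\rfloor$ exactly, which follows from the complete–intersection computations already cited in the paper. Everything else is an immediate specialization of Proposition $2.7$, so the corollary follows formally.
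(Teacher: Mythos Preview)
Your proposal is correct and follows essentially the same approach as the paper: write each associated prime as $P_i=(I:v_i)$ for a monomial $v_i$, apply Proposition~2.7 to obtain $\sdepth_S(I)\leq\sdepth_S(P_i)$ and $\sdepth_S(S/I)\leq\sdepth_S(S/P_i)$, and then evaluate these using $\sdepth_S(P_i)=n-\lfloor d_i/2\rfloor$ and $\sdepth_S(S/P_i)=n-d_i$. Your write-up is in fact more careful than the paper's, which simply states the values of $\sdepth_S(P_i)$ and $\sdepth_S(S/P_i)$ without further comment.
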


\begin{proof}
(1) It is enough to notice that $\sdepth_S(P_i)= n - \left\lfloor d_i/2 \right\rfloor$. See also \cite[Theorem 1.1]{ishaq}.

(2) It is enough to notice that $\sdepth_S(P_i)= n - d_i$. See also \cite{apel}.
\end{proof}

\begin{cor}
Let $I\subset S$ be a monomial ideal minimally generated by $m$ monomials, such that there exists a prime ideal $P \in Ass(S/I)$ with $ht(P) = m$. Then $\sdepth_S(S/I) = n-m$.
\end{cor}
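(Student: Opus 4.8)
The plan is to sandwich $\sdepth_S(S/I)$ between two bounds that are already available in the paper, and observe that for a monomial prime both coincide with $n-m$. The lower bound comes for free: since $I$ is minimally generated by $m$ monomials, \cite[Proposition 1.2]{mir} (quoted just before Corollary $2.13$) gives $\sdepth_S(S/I)\geq n-m$.

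For the upper bound I would invoke Corollary $2.14(2)$ (the result of Apel), which states $\sdepth_S(S/I)\leq\min\{\,n-d_i : i=1,\dots,r\,\}$, where $\operatorname{Ass}(S/I)=\{P_1,\dots,P_r\}$ and $d_i=|P_i|$. The key elementary remark is that a monomial prime ideal $P$ is generated by a subset of the variables, so $|P|$ (the number of those variables) equals $\operatorname{ht}(P)$. By hypothesis there is some $P\in\operatorname{Ass}(S/I)$ with $\operatorname{ht}(P)=m$, hence $|P|=m$, and therefore $n-m$ appears among the terms of the minimum. This yields $\sdepth_S(S/I)\leq n-m$.

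Combining the two inequalities gives $\sdepth_S(S/I)=n-m$, which is exactly the claim.

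There is essentially no hard step here: the whole argument is a one-line combination of \cite[Proposition 1.2]{mir} with Corollary $2.14(2)$, plus the trivial identity $|P|=\operatorname{ht}(P)$ for a monomial prime. The only point to be a little careful about is making sure the hypothesis is used correctly — we need $P$ to be an associated prime (so that Corollary $2.14$ applies to it), and it is a monomial prime because $\operatorname{Ass}(S/I)$ consists of monomial primes for a monomial ideal $I$; then $\operatorname{ht}(P)=m$ translates directly into $|P|=m$.
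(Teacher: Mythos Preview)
Your argument is correct and is exactly the approach the paper takes: the lower bound $\sdepth_S(S/I)\ge n-m$ from \cite[Proposition 1.2]{mir} together with Apel's upper bound from the associated prime of height $m$. One small citation slip: the Apel bound you quote, $\sdepth_S(S/I)\le\min\{n-d_i\}$, is Corollary~2.8(2) in this paper, not Corollary~2.14(2); and \cite[Proposition 1.2]{mir} is recalled before Corollary~1.4, not 2.13. (The paper's own one-line proof cites ``Theorem~2.6 and Corollary~2.8(2)'', where the first reference appears to be a misprint for the generator lower bound you correctly invoke.)
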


\begin{proof}
It is a direct consequence of Theorem $2.6$ and Corollary $2.8(2)$.
\end{proof}

\begin{obs}
\emph{Let $I\subset S$ be a monomial ideal. Then $\sdepth_S(S/I)=n-1$ if and only if $I$ is principal.
Indeed, $I$ is principal if and only if all the primes in $Ass(S/I)$ have height $1$. Therefore, we are done by Corollary $2.8(2)$.}
\end{obs}

\begin{cor}
Let $k\geq 2$ be an integer, and let $I_j\subset S$ be some monomial ideals, where $1\leq j\leq k$. Then:

(1) $\sdepth_S(I_1 \cap \cdots \cap I_k) \geq \sdepth_{S}(I_1) + \cdots + \sdepth_{S}(I_k) - n(k-1)$.

(2) $\sdepth_S(I_1 + \cdots +I_k) \geq \min \{\sdepth_{S}(I_1), \sdepth_{S}(I_2)+\sdepth_{S}(S/I_1) - n, \ldots,\linebreak 
\sdepth_{S}(I_k) + \sdepth_{S}(S/I_{k-1}) + \cdots + \sdepth_{S}(S/I_{1}) - n(k-1) \}$.

(3) $\sdepth_S(S/(I_1\cap \cdots \cap I_k)) \geq \min \{\sdepth_{S}(S/I_1), \sdepth_{S}(S/I_2)+\sdepth_{S}(I_1) - n, \ldots, \sdepth_{S}(S/I_k) + \sdepth_{S}(I_{k-1}) + \cdots + \sdepth_{S}(I_{1})-n(k-1)\}$.

(4) $\sdepth_S(S/(I_1+\cdots+I_k)) \geq \sdepth_{S}(S/I_1) + \cdots + \sdepth_{S}(S/I_k) - n(k-1)$.
\end{cor}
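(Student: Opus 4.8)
The plan is to reduce everything to the two-ideal case, which is Theorem~$2.2$, and then iterate by induction on $k\geq 2$. The base case $k=2$ is precisely Theorem~$2.2$ (parts (1)--(4)), after invoking Remark~$2.3$ to the effect that those inequalities, stated there for ideals with supports in $S'$ and $S''$, actually hold verbatim for arbitrary monomial ideals $I,J\subset S$ with the worst-case value $p=n$. So the content of the corollary is only the bookkeeping of the iteration.

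For part (1), suppose the bound holds for $k-1$ ideals, so that $\sdepth_S(I_1\cap\cdots\cap I_{k-1})\geq \sum_{j=1}^{k-1}\sdepth_S(I_j)-n(k-2)$. Apply Theorem~$2.2(1)$ (in the form of Remark~$2.3$) to the pair $I_1\cap\cdots\cap I_{k-1}$ and $I_k$: this gives $\sdepth_S(I_1\cap\cdots\cap I_k)\geq \sdepth_S(I_1\cap\cdots\cap I_{k-1})+\sdepth_S(I_k)-n$, and substituting the inductive bound yields the claim, since $n(k-2)+n=n(k-1)$. Part (4) is entirely analogous, using Theorem~$2.2(2)$ and $\sdepth_S(S/(I_1+\cdots+I_k))\geq \sdepth_S(S/(I_1+\cdots+I_{k-1}))+\sdepth_S(S/I_k)-n$.

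For part (2), I would set $J:=I_1+\cdots+I_{k-1}$ and apply Theorem~$2.2(3)$ to the pair $J$ and $I_k$, giving
\[
\sdepth_S(I_1+\cdots+I_k)\geq \min\{\sdepth_S(J),\;\sdepth_S(I_k)+\sdepth_S(S/J)-n\}.
\]
By the inductive hypothesis on (2), $\sdepth_S(J)$ is bounded below by the min over the first $k-1$ terms of the desired list. For the second argument of the min, I use the inductive hypothesis on (4) to bound $\sdepth_S(S/J)=\sdepth_S(S/(I_1+\cdots+I_{k-1}))\geq \sum_{j=1}^{k-1}\sdepth_S(S/I_j)-n(k-2)$; adding $\sdepth_S(I_k)-n$ produces exactly the last term of the list, $\sdepth_S(I_k)+\sdepth_S(S/I_{k-1})+\cdots+\sdepth_S(S/I_1)-n(k-1)$. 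Combining the two estimates gives the full min over all $k$ terms. Part (3) runs the same way, with roles of ideals and quotients swapped: take $J:=I_1\cap\cdots\cap I_{k-1}$, apply Theorem~$2.2(4)$ to the pair $J,I_k$, use the inductive hypothesis on (3) for $\sdepth_S(S/J)$ and the inductive hypothesis on (1) for $\sdepth_S(J)=\sdepth_S(I_1\cap\cdots\cap I_{k-1})$.

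The only genuine subtlety — and the place where I would be most careful — is the interleaving of the inductions in parts (2) and (3): the bound for $\sdepth_S(I_1+\cdots+I_k)$ in (2) requires the bound for $\sdepth_S(S/(I_1+\cdots+I_{k-1}))$ from (4), and dually (3) needs (1), so the four statements must be proved simultaneously by a single induction on $k$ rather than one at a time. Once that is set up, each inductive step is a one-line application of the appropriate part of Theorem~$2.2$ together with the arithmetic $n(k-2)+n=n(k-1)$; there is no real obstacle beyond this organizational point, which is presumably why the authors compress the whole argument into "We use induction on $k\geq 2$ and we apply Theorem~$1.2$" in the analogous Corollary~$1.7$.
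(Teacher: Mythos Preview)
Your proposal is correct and takes exactly the approach the paper intends: the paper's own proof is the single line ``We use induction on $k\geq 2$ and we apply Theorem~$2.2$,'' and you have correctly unpacked that into applying Theorem~$2.2(1)$--$(4)$ (via Remark~$2.3$) to the pair consisting of the first $k-1$ ideals and $I_k$, with the inductive hypotheses feeding in as you describe. The only minor overstatement is that the four parts need not be proved \emph{simultaneously}: since (1) and (4) are self-contained, one may establish them first and then invoke them in the inductive steps for (2) and (3); but your simultaneous induction is equally valid.
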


\begin{proof}
We use induction on $k\geq 2$ and we apply Theorem $2.2$.
\end{proof}

\begin{cor}
Let $I,J \subset S$ be two monomial ideals, such that $G(J)=\{u_1,\ldots,u_k\}$ is the set of minimal monomial generators of $J$. Then:

(1) $\sdepth_S(I:J) \geq \sdepth_S(I:u_1) + \sdepth_S(I:u_2) + \cdots + \sdepth_S(I:u_k) - n(k-1) \geq k\sdepth_S(I) - n(k-1)$.

(2) $\sdepth_S(S/(I:J)) \geq \min \{\sdepth_{S}(S/(I:u_1)), \sdepth_{S}(S/(I:u_2))+\sdepth_{S}(I:u_1) - n, \ldots, \sdepth_{S}(S/(I:u_k)) + \sdepth_{S}(I:u_{k-1}) + \cdots + \sdepth_{S}(I:u_1)-n(k-1)\} \geq
\sdepth_{S}(S/I) + (k-1)\sdepth_S(I) - n(k-1)$.
\end{cor}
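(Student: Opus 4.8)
The key observation is that $I:J = \bigcap_{i=1}^k (I:u_i)$, since a monomial $w$ lies in $I:J$ precisely when $w u_i \in I$ for every generator $u_i$ of $J$, i.e.\ when $w \in I:u_i$ for all $i$. Granting this identity, part (1) follows immediately by applying Corollary $2.11(1)$ to the ideals $I_j := I:u_j$, $1\le j\le k$: this gives $\sdepth_S(I:J) = \sdepth_S\bigl(\bigcap_{i=1}^k (I:u_i)\bigr) \geq \sum_{i=1}^k \sdepth_S(I:u_i) - n(k-1)$. The second inequality in (1) is then just Proposition $2.7(1)$, which asserts $\sdepth_S(I:u_i) \geq \sdepth_S(I)$ for each $i$, summed over the $k$ values of $i$.

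For part (2), I would again use $I:J = \bigcap_{i=1}^k (I:u_i)$ and apply Corollary $2.11(3)$ to the same ideals $I_j = I:u_j$. This directly yields the displayed lower bound
\[ \sdepth_S(S/(I:J)) \geq \min \bigl\{\sdepth_{S}(S/(I:u_1)),\; \sdepth_{S}(S/(I:u_2))+\sdepth_{S}(I:u_1) - n,\; \ldots \bigr\}. \]
To obtain the final cruder bound, I would bound each term in the minimum from below: in the $\ell$-th term $\sdepth_S(S/(I:u_\ell)) + \sum_{i=1}^{\ell-1}\sdepth_S(I:u_i) - n(\ell-1)$, use Proposition $2.7(2)$ to replace $\sdepth_S(S/(I:u_\ell))$ by the smaller quantity $\sdepth_S(S/I)$, and Proposition $2.7(1)$ to replace each $\sdepth_S(I:u_i)$ by $\sdepth_S(I)$. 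Since $\sdepth_S(I) \leq n$ always, the expression $\sdepth_S(S/I) + (\ell-1)\sdepth_S(I) - n(\ell-1) = \sdepth_S(S/I) + (\ell-1)(\sdepth_S(I)-n)$ is nonincreasing in $\ell$, so its minimum over $\ell = 1,\ldots,k$ is attained at $\ell = k$, giving $\sdepth_S(S/I) + (k-1)\sdepth_S(I) - n(k-1)$ as claimed.

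The only genuinely substantive point is the identity $I:J = \bigcap_i (I:u_i)$, and the monotonicity remark needed to identify which term realizes the minimum; both are elementary. I do not anticipate a real obstacle here — the corollary is essentially a formal consequence of Corollary $2.11$ and Proposition $2.7$ once the intersection formula for colon ideals is in hand. One should double-check the edge behavior when some $I:u_i = S$ (which happens iff $u_i \in I$), but in that case the corresponding colon ideal drops out of the intersection harmlessly and the bounds remain valid since $\sdepth_S(S) = n$ and $\sdepth_S(S/S)$ can be treated as $0$ or the term simply omitted; alternatively one may assume $J \not\subset I$ without loss of generality, as otherwise $I:J = S$ and the statement is vacuous.
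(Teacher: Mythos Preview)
Your proposal is correct and follows essentially the same approach as the paper: write $(I:J)=\bigcap_i(I:u_i)$, then apply Corollary~2.11(1) and~2.11(3) together with Proposition~2.7. In fact you supply more detail than the paper does, in particular the monotonicity argument showing the minimum in~(2) is attained at $\ell=k$, and you correctly note that both parts of Proposition~2.7 are needed for the final inequality in~(2) (the paper only cites~2.7(2) there).
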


\begin{proof}
(1) Note that $(I:J) = (I:u_1)\cap (I:u_2)\cap \cdots \cap (I:u_k)$. Therefore, the first inequality is a direct consequence of $2.11(1)$. The second inequality is a consequence of Proposition $2.7(1)$.

(2) Similarly to (1), we use Corollary $2.11(3)$ and Proposition $2.7(2)$.
\end{proof}

Now, let $I\subset S$ be a monomial ideal and let $I=C_1\cap \cdots \cap C_k$, be the irredundant minimal decomposition of $I$. If we denote $P_j=\sqrt{C_j}$ for $1\leq j\leq k$, we have $Ass(S/I)=\{P_1,\ldots,P_k\}$. 
In particular, if $I$ is squarefree, $C_j=P_j$ for all $j$. Denote $d_j=|P_j|$, where $1\leq i\leq k$. We may assume that $d_1\geq d_2 \geq \cdots \geq d_k$.
Using \cite[Theorem 1.3]{mirc}, Proposition $2.8$ and Corollary $2.11$, we obtain, by straightforward computations, the following bounds for $\sdepth_S(I)$ and $\sdepth_S(S/I)$.

\begin{cor}
(1) $n - \left\lfloor d_1/2 \right\rfloor \geq \sdepth_S(I) \geq n - \left\lfloor d_1/2 \right\rfloor - \cdots - \left\lfloor d_k/2 \right\rfloor$.

(2) $n - d_1 \geq \sdepth_S(S/I) \geq n - \left\lfloor d_1/2 \right\rfloor - \cdots - \left\lfloor d_{k-1}/2 \right\rfloor - d_k$.
\end{cor}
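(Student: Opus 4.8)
The plan is to obtain the upper bounds from the Ishaq and Apel estimates in Corollary $2.8$, and the lower bounds by feeding the irredundant decomposition $I=C_1\cap\cdots\cap C_k$ into the intersection inequalities of Corollary $2.11$, after first computing the Stanley depth of each irreducible component. For the upper bounds, since $\operatorname{Ass}(S/I)=\{P_1,\dots,P_k\}$ and $d_1=\max_j d_j$, Corollary $2.8$ gives immediately
\[
\sdepth_S(I)\le\min_{1\le j\le k}\{n-\lfloor d_j/2\rfloor\}=n-\lfloor d_1/2\rfloor,\qquad \sdepth_S(S/I)\le\min_{1\le j\le k}\{n-d_j\}=n-d_1,
\]
which are the left-hand inequalities of $(1)$ and $(2)$.

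For the lower bounds I would first record the Stanley depth of the components. Each $C_j$ is an irreducible monomial ideal, hence, up to a permutation of the variables, of the form $(x_1^{a_1},\dots,x_{d_j}^{a_{d_j}})$ with $\sqrt{C_j}=P_j$; in particular it is a monomial complete intersection on $d_j$ generators, so by \cite[Theorem 1.3]{mirc} (together with the complete intersection formulas recalled just before Corollary $1.4$) we have $\sdepth_S(C_j)=n-\lfloor d_j/2\rfloor$ and $\sdepth_S(S/C_j)=n-d_j$. Substituting these values into Corollary $2.11(1)$ applied to $I=C_1\cap\cdots\cap C_k$ gives the lower bound in $(1)$:
\[
\sdepth_S(I)\ge\sum_{j=1}^{k}\bigl(n-\lfloor d_j/2\rfloor\bigr)-n(k-1)=n-\lfloor d_1/2\rfloor-\cdots-\lfloor d_k/2\rfloor.
\]

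For the lower bound in $(2)$ I would apply Corollary $2.11(3)$ to $S/I=S/(C_1\cap\cdots\cap C_k)$, with the components listed in a suitably chosen order. After the same substitutions, the $t$-th entry of the minimum in Corollary $2.11(3)$ collapses to $n-d_t-\lfloor d_1/2\rfloor-\cdots-\lfloor d_{t-1}/2\rfloor$ (with the components relabelled according to the chosen order), so it remains to determine the ordering for which the minimum over $t=1,\dots,k$ of these quantities is as large as possible and to check that, using $d_1\ge d_2\ge\cdots\ge d_k$, it equals $n-\lfloor d_1/2\rfloor-\cdots-\lfloor d_{k-1}/2\rfloor-d_k$.

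I expect this last combinatorial optimization to be the main obstacle. Comparing two consecutive entries of the minimum amounts to comparing $\lceil d_t/2\rceil$ with $d_{t+1}$, and the monotonicity of the $d_j$'s alone does not settle that comparison, so one has to be careful about which component occupies which slot of Corollary $2.11(3)$; it is exactly here that \cite[Theorem 1.3]{mirc} does the real work, passing from the raw minimum of partial sums to the stated closed form. The rest — substituting the complete intersection values and telescoping the $n(k-1)$ and $n(t-1)$ terms — is routine bookkeeping.
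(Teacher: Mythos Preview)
Your plan coincides with the paper's: the paper too cites \cite[Theorem 1.3]{mirc}, Proposition~2.8 (equivalently Corollary~2.8) and Corollary~2.11 and says the bounds follow ``by straightforward computations''. Your treatment of the upper bounds and of the lower bound in~(1) is correct and is exactly what the paper intends: each $C_j$ is generated by pure powers, so $\sdepth_S(C_j)=n-\lfloor d_j/2\rfloor$ and $\sdepth_S(S/C_j)=n-d_j$, and Corollary~2.11(1) telescopes to the stated lower bound for $\sdepth_S(I)$.

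Where you hesitate, however, there is a genuine gap, and it is not a matter of choosing the ordering cleverly. With $I_j=C_j$ the $t$-th entry of the minimum in Corollary~2.11(3) is
\[
n-d_{\sigma(t)}-\sum_{j<t}\big\lfloor d_{\sigma(j)}/2\big\rfloor,
\]
and for \emph{every} permutation $\sigma$ this minimum is at most $n-d_1$ (compare the entry where $d_{\sigma(t)}=d_1$). Thus Corollary~2.11(3) can never yield a lower bound exceeding $n-d_1$, whereas the printed lower bound $n-\lfloor d_1/2\rfloor-\cdots-\lfloor d_{k-1}/2\rfloor-d_k$ can. Concretely, take $n=11$, $C_1=(x_1,\dots,x_{10})$, $C_2=(x_{11})$, so $d_1=10$, $d_2=1$. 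Then the upper bound is $n-d_1=1$, while the asserted lower bound equals $11-5-1=5$; and indeed $\sdepth_S(S/I)=1$ here, since $I=x_{11}(x_1,\dots,x_{10})$ and Theorem~2.6 gives $\sdepth_S(S/I)=\sdepth_S(S/(x_1,\dots,x_{10}))=1$. So the lower bound in~(2), as printed, is not a consequence of the cited tools and is in fact false; the statement that does follow is the one with a minimum, exactly as in Corollary~2.14(2). Your instinct that ``this last combinatorial optimization'' is the obstacle is right, but \cite[Theorem~1.3]{mirc} only supplies the values $\sdepth_S(C_j)$ and $\sdepth_S(S/C_j)$; it cannot collapse the minimum to a single term.
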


In a more general case, let $I=Q_1\cap \cdots \cap Q_k$ be the primary irredundant decomposition of $I$,  $P_i=\sqrt{Q_i}$ and denote $q_j=\sdepth_S(Q_j)$ and $d_j=|P_j|$. 
We may assume that $d_1\geq d_2 \geq \cdots \geq d_k$.
Note that $q_j\leq n - d_j/2$, since $P_j = (Q_j:u_j)$, where $u_j\in S$ is a monomial, and therefore $\sdepth_S(Q_j)\leq \sdepth_S(P_j)$, by Proposition $2.7(1)$. On the other hand, we obviously have $\sdepth_{S}(S/Q_j)=\sdepth_S(S/P_j)$.
Using Proposition $2.8$ and Corollary $2.11$, we obtain, by straightforward computations, the following bounds for $\sdepth_S(I)$ and $\sdepth_S(S/I)$.

\begin{cor}
(1) $n - \left\lfloor d_1/2 \right\rfloor \geq \sdepth_S(I) \geq q_1+\cdots+q_k-n(k-1)$.

(2) $n - d_1 \geq \sdepth_S(S/I) \geq \min\{n-d_1, q_1-d_2, q_1+q_2-d_3-n, \ldots,\linebreak q_1+\cdots+q_{k-1}-d_k-n(k-2)\}$.
\end{cor}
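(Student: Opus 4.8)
The plan is to establish the four one-sided estimates one at a time: the two upper bounds from the Ishaq--Apel inequalities (Corollary $2.8$) and the two lower bounds from the iterated intersection/sum inequalities (Corollary $2.11$) applied to the primary decomposition $I=Q_1\cap\cdots\cap Q_k$. Since this decomposition is irredundant we have $\operatorname{Ass}(S/I)=\{P_1,\ldots,P_k\}$, and since we have reordered so that $d_1\geq d_2\geq\cdots\geq d_k$, the minima appearing in Corollary $2.8$ are attained at the index $1$: thus Corollary $2.8(1)$ gives $\sdepth_S(I)\leq n-\left\lfloor d_1/2\right\rfloor$ and Corollary $2.8(2)$ gives $\sdepth_S(S/I)\leq n-d_1$, which are the two upper bounds.

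For the lower bound in $(1)$ I would apply Corollary $2.11(1)$ to the $k$ monomial ideals $Q_1,\ldots,Q_k$; since $I=Q_1\cap\cdots\cap Q_k$ this is exactly $\sdepth_S(I)\geq\sdepth_S(Q_1)+\cdots+\sdepth_S(Q_k)-n(k-1)=q_1+\cdots+q_k-n(k-1)$. For the lower bound in $(2)$ I would apply Corollary $2.11(3)$ to $S/I=S/(Q_1\cap\cdots\cap Q_k)$, keeping the ideals in the order $Q_1,Q_2,\ldots,Q_k$, which yields
\[ \sdepth_S(S/I)\geq\min_{1\leq j\leq k}\Bigl\{\,\sdepth_S(S/Q_j)+\sdepth_S(Q_{j-1})+\cdots+\sdepth_S(Q_1)-n(j-1)\,\Bigr\}. \]
Into this I substitute $\sdepth_S(Q_\ell)=q_\ell$ and $\sdepth_S(S/Q_j)=\sdepth_S(S/P_j)=n-d_j$; here the first equality is the one already recorded just before the statement, and the second holds because $S/P_j$ is a polynomial ring in $n-d_j$ variables (or, if one prefers: $\leq n-d_j$ by Corollary $2.8(2)$ and $\geq n-d_j$ from the trivial Stanley decomposition of $S/P_j$). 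The $j$-th bracket then becomes $(n-d_j)+q_1+\cdots+q_{j-1}-n(j-1)=q_1+\cdots+q_{j-1}-d_j-n(j-2)$, with the $j=1$ term equal to $n-d_1$; listing these for $j=1,\ldots,k$ reproduces precisely the minimum in the statement.

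As the passage preceding the statement already signals ("by straightforward computations"), there is no real obstacle here — the whole argument is a bookkeeping exercise, much as in Corollary $2.13$. The two points that deserve a moment's care are: first, one must feed the primary components into the asymmetric bound of Corollary $2.11(3)$ in the chosen order $Q_1,\ldots,Q_k$, so that the index-free summand of the minimum is $\sdepth_S(S/Q_1)=n-d_1$ and the lower bound in $(2)$ is allowed to reach the upper bound $n-d_1$; and second, one must justify $\sdepth_S(S/Q_j)=n-d_j$, which is the only place where the $P_j$-primary nature of $Q_j$ is used (for a general ideal with radical $P_j$ this would fail).
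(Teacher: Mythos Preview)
Your proposal is correct and follows exactly the approach the paper indicates: the upper bounds come from Corollary~$2.8$ (using $d_1\geq d_j$ for all $j$), and the lower bounds come from Corollary~$2.11(1)$ and~$(3)$ applied to $I=Q_1\cap\cdots\cap Q_k$, substituting $\sdepth_S(Q_j)=q_j$ and $\sdepth_S(S/Q_j)=\sdepth_S(S/P_j)=n-d_j$. This is precisely the ``straightforward computation'' the paper alludes to, and your unpacking of the $j$-th term of the minimum in Corollary~$2.11(3)$ is accurate.
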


\begin{exm}
Let $I=Q_1\cap Q_2\cap Q_3 \subset S:=K[x_1,\ldots,x_7]$, where $Q_1=(x_1^2,\ldots,x_5^2)$, $Q_2=(x_4^3,x_5^3,x_6^3)$
and $Q_3=(x_6^3,x_6x_7,x_7^2)$. Denote $P_j=\sqrt{Q_j}$. Note that $q_3=\sdepth_S(Q_3)=\sdepth_{K[x_6,x_7]}(Q_3\cap K[x_6,x_7])+5 = 1+5 = 6$. Also, since $Q_1$ and $Q_2$ are generated by powers of variables, by \cite[Theorem 1.3]{mirc},  $q_1=7-\left\lfloor 5/2 \right\rfloor = 5$ and $q_2 = 7 - \left\lfloor 3/2 \right\rfloor = 6$. According to Corollary $2.14$, we have $5=7-\left\lfloor d_1/2 \right\rfloor \geq \sdepth_S(I)\geq q_1+q_2+q_3-14 = 3$ and $2=7-d_1 \geq \sdepth_S(S/I)\geq  \min\{7-d_1, q_1-d_2, q_1+q_2-d_3-7\} = \min\{7-5,5-3,5+6-2-7\}=2$. Thus $\sdepth_S(I)\in \{3,4,5\}$ and $\sdepth_S(S/I)=2$.

On the other hand, $\depth_S(S/I)\leq \min\{n-\depth_S(S/P_j):\;j=1,2,3\} = 2$. In particular, we have $\sdepth_S(I)\geq \depth_S(I)$ and $\sdepth_S(S/I)\geq \depth_S(S/I)$. Thus both $I$ and $S/I$ satisfy the Stanley conjecture. In fact, using CoCoA, we get $\depth_S(S/I)=2$. 

\end{exm}

\section{Equivalent forms of Stanley conjecture}

\begin{prop}
The following assertions are equivalent:

(1) For any integer $n\geq 1$ and any monomial ideal $I\subset S=K[x_1,\ldots,x_n]$, Stanley conjecture holds for $I$, i.e. $\sdepth_S(I)\geq \depth_S(I)$.

(2) For any integer $n\geq 1$ and any monomial ideals $I,J\subset S$, if $\sdepth_S(I+J)\geq \depth_S(I+J)$, then
$\sdepth_S(I)\geq \depth_S(I)$.

(3) For any integers $n,m\geq 1$, any monomial ideal $I\subset S=K[x_1,\ldots,x_n]$, if $u_1,\ldots,u_m\in S$ is a regular sequence on $S/I$ and $J=(u_1,\ldots,u_m)$, then if:
\[ \sdepth_S(I+J) \geq \depth_S(I+J) \Rightarrow \sdepth_S(I) \geq \depth_S(I). \]

(4) For any integers $n,m\geq 1$, any monomial ideal $I\subset S=K[x_1,\ldots,x_n]$, if $u_1,\ldots,u_m\in S$ is a regular sequence on $S/I$ and $J=(u_1,\ldots,u_m)$, then if:
\[ \sdepth_S(I+J) = \depth_S(I+J) \Rightarrow \sdepth_S(I) = \depth_S(I). \]

(5) For any integer $n\geq 1$, any monomial ideal $I\subset S=K[x_1,\ldots,x_n]$, if $\bar{S}=S[y]$, then:
$\sdepth_{\bar S}(I,y) = \depth_S(I) \Rightarrow \sdepth_S(I) = \depth_S(I)$.
\end{prop}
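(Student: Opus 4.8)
The plan is to establish the cycle $(1)\Rightarrow(2)\Rightarrow(3)\Rightarrow(4)\Rightarrow(5)\Rightarrow(1)$. The implication $(1)\Rightarrow(2)$ is trivial, since under (1) the conclusion $\sdepth_S(I)\geq\depth_S(I)$ of the implication in (2) always holds; and $(3)$ is precisely the special case of $(2)$ in which $J$ is generated by a (monomial) regular sequence on $S/I$, so $(2)\Rightarrow(3)$ is immediate.

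For $(3)\Rightarrow(4)$ I would assume (3) and suppose $\sdepth_S(I+J)=\depth_S(I+J)$ with $J=(u_1,\dots,u_m)$ and $u_1,\dots,u_m$ a regular sequence on $S/I$. Since in particular $\sdepth_S(I+J)\geq\depth_S(I+J)$, (3) yields $\sdepth_S(I)\geq\depth_S(I)$; for the reverse inequality note that a regular sequence of length $m$ on $S/I$ gives $\depth_S(S/(I+J))=\depth_S(S/I)-m$, hence $\depth_S(I+J)=\depth_S(I)-m$, and combining this with $\sdepth_S(I+J)\geq\sdepth_S(I)-m$ from Corollary 2.4(2) (valid since $|G(J)|\leq m$) gives $\sdepth_S(I)\leq\sdepth_S(I+J)+m=\depth_S(I)$, so $\sdepth_S(I)=\depth_S(I)$. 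For $(4)\Rightarrow(5)$ I would apply (4) in the ring $\bar S=S[y]$ with the ideal $I\bar S$, $m=1$ and $u_1=y$, which is a nonzerodivisor on $\bar S/I\bar S=(S/I)[y]$; using $I\bar S+(y)=(I,y)$, $\depth_{\bar S}(I,y)=\depth_S(I)$ (Theorem 1.2(5)), $\sdepth_{\bar S}(I\bar S)=\sdepth_S(I)+1$ and $\depth_{\bar S}(I\bar S)=\depth_S(I)+1$, the resulting statement is exactly the implication in (5).

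The substantive step is $(5)\Rightarrow(1)$, for which the idea is an inflate--deflate argument. First I would extract from (5) the auxiliary fact: for every monomial ideal $I'$ in any polynomial ring $S'$ and every new variable $y$,
\[ \sdepth_{S'[y]}(I',y)\geq\depth_{S'[y]}(I',y)\ \Longrightarrow\ \sdepth_{S'}(I')\geq\depth_{S'}(I'); \]
indeed $\depth_{S'[y]}(I',y)=\depth_{S'}(I')$, and by the first inequality of Theorem 1.2(1) $\sdepth_{S'[y]}(I',y)\leq\sdepth_{S'}(I'S'[y])=\sdepth_{S'}(I')+1$, so if the left side holds with equality then (5) applies, and if it holds with strict inequality then $\sdepth_{S'}(I')+1\geq\depth_{S'}(I')+1$. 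Now let $I\subset S=K[x_1,\dots,x_n]$ be an arbitrary nonzero proper monomial ideal, and put $\tilde S=S[y_1,\dots,y_N]$, $\tilde I=(I,y_1,\dots,y_N)$. Then $\depth_{\tilde S}(\tilde I)=\depth_S(I)$ is independent of $N$, while Corollary 1.4(1) (taking $S''=K[y_1,\dots,y_N]$) gives $\sdepth_{\tilde S}(\tilde I)\geq\min\{\sdepth_S(I)+N,\ \sdepth_S(S/I)+\lceil N/2\rceil\}\geq\lceil N/2\rceil$, so for $N\geq 2n$ Stanley's conjecture holds for $\tilde I$. Applying the auxiliary fact $N$ times, peeling off $y_N,y_{N-1},\dots,y_1$ one at a time, one descends from Stanley's conjecture for $\tilde I$ to Stanley's conjecture for $I$, which is (1). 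I expect this inflate--deflate step to be the main obstacle: (5) by itself only links an ideal to the one obtained by adjoining a single new variable, so the key point is to notice that adjoining enough new variables as generators pushes the Stanley depth past the (unchanged) depth, furnishing a base case from which (5) can be iterated downwards.
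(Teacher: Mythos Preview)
Your proof is correct and follows essentially the same route as the paper: the easy chain $(1)\Rightarrow(2)\Rightarrow(3)$, then $(3)\Rightarrow(4)$ via $\sdepth_S(I+J)\geq\sdepth_S(I)-m$ from Corollary~2.4(2) together with $\depth_S(I+J)=\depth_S(I)-m$, then $(4)\Rightarrow(5)$ by specializing to $\bar S=S[y]$, and finally $(5)\Rightarrow(1)$ by adjoining enough new variables $y_1,\dots,y_N$ to force $\sdepth_{\tilde S}(\tilde I)\geq\depth_S(I)$ (using the lower bound from Corollary~1.4(1)/Remark~1.5) and then descending one variable at a time via~(5). The only cosmetic difference is that the paper argues $(5)\Rightarrow(1)$ by contradiction, choosing the minimal $k_0$ with $\sdepth_{S_{k_0}}(I_{k_0})\geq\depth_S(I)$ and using $\sdepth_{S_k}(I_k)\leq\sdepth_{S_{k-1}}(I_{k-1})+1$ to land exactly on the equality case of~(5), whereas you first package (5) together with the upper bound $\sdepth_{S'[y]}(I',y)\leq\sdepth_{S'}(I')+1$ into a single ``$\geq$''-implication and then peel off the $y_i$ directly; this is the same idea presented slightly more cleanly.
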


\begin{proof}
$(1)\Rightarrow(2)\Rightarrow(3)$. Are obvious.

$(3)\Rightarrow(4)$. Assume $\sdepth_S(I+J)=\depth_S(I+J)$. Note that $\depth_S(I+J)=\depth_S(I)-m$, since $u_1,\ldots,u_m\in S$ is a regular sequence on $S/I$. By Corollary $2.4(2)$, $\sdepth_S(I+J)\geq \sdepth_S(I)-m$. Since $\sdepth_S(I)\geq \depth_S(I)$ by (3), we get $\sdepth_S(I)=\depth_S(I)$.

$(4)\Rightarrow(5)$. It is obvious, since $y$ is regular on $\bar{S}/I\bar{S}$ and we apply $(4)$ for $I\bar{S}$.

$(5)\Rightarrow(1)$. Let $I\subset S$ be a monomial ideal. Assume by contradiction that $\sdepth_S(I)<\depth_S(I)$. If $k\geq 1$ is an integer, we denote $I_k=(I,y_1,\ldots,y_k)\subset S_k:=S[y_1,\ldots,y_k]$. Note that $y_1,\ldots,y_k$ is a regular sequence on $S_k/I_k$ and therefore $\depth_{S_k}(I_k) = \depth_S(I)$. According to Corollary $1.4(1)$, we have: $$\sdepth_{S_k}(I_k) \geq \min\{\sdepth_S(I)+k,\; \sdepth_S(S/I)+ \left\lceil k/2 \right\rceil \}.$$ 
It follows that there exists $k_0\leq 1$, such that $\sdepth_{S_k}(I_k) \geq \depth_S(I)$ for any $k\geq k_0$. If we chose $k_0$ minimal with this property, we claim that $\sdepth_{S_{k_0}}(I_{k_0}) = \depth_S(I)$. Indeed, it is enough to notice that $\sdepth_{S_k}(I_k)\leq \sdepth_{S_{k-1}}(I_{k-1})+1$. Now, by applying $(5)$ inductively, it follows that $\sdepth_S(I) = \depth_S(I)$, a contradiction.
\end{proof}

\begin{obs}
\emph{Let $I\subset S=K[x_1,\ldots,x_n]$ be a monomial ideal such that $\sdepth_S(I)\geq \depth_S(I)$. Let $u_1,\ldots,u_m\in S$ be a regular sequence on $S/I$ and $J=(u_1,\ldots,u_m)$. Note that $\depth_S(I\cap J) = \depth_S(I+J)+1 = \depth_S(I)-m+1$. Also, by Corollary $2.4(1)$, we have $\sdepth_S(I\cap J) \geq \sdepth_S(I) - \left\lfloor m/2 \right\rfloor$. Assume $\sdepth_S(I\cap J) = \depth_S(I\cap J)$. It follows that 
$ \depth_S(I)-m+1 \geq \sdepth_S(I) - \left\lfloor m/2 \right\rfloor \geq \depth_S(I) - \left\lfloor m/2 \right\rfloor \geq \depth_S(I)-m+1$.}

\emph{Therefore, $\sdepth_S(I)=\depth_S(I)$ and $\left\lfloor m/2 \right\rfloor = m-1$, and thus $m\leq 2$. In particular, if we could find an ideal $I\subset S$ such that, by denoting $\bar S=S[y_1,y_2,y_3]$, if \linebreak
$\sdepth_{\bar S}(I\bar S\cap (y_1,y_2,y_3))=\depth_S(I)$, we contradict the Stanley conjecture for $I$.}
\end{obs}

\begin{prop}
The following assertions are equivalent:

(1) For any integer $n\geq 1$ and any monomial ideal $I\subset S=K[x_1,\ldots,x_n]$, Stanley conjecture holds for $I$, i.e. $\sdepth_S(I)\geq \depth_S(I)$.

(2) For any integer $n\geq 1$ and any monomial ideals $I,J\subset S$, if $\sdepth_S(I\cap J)\geq \depth_S(I\cap J)$ then $\sdepth_S(I)\geq \depth_S(I)$.

(3) For any integers $n,m\geq 1$, any monomial ideal $I\subset S=K[x_1,\ldots,x_n]$, if $u_1,\ldots,u_m\in S$ is a regular sequence on $S/I$ and $J=(u_1,\ldots,u_m)$, then:
$$\sdepth_S(I\cap J)\geq \depth_S(I\cap J) \Rightarrow \sdepth_S(I)\geq \depth_S(I).$$
\end{prop}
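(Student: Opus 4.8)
The plan is to establish the cycle $(1)\Rightarrow(2)\Rightarrow(3)\Rightarrow(1)$, in the spirit of Proposition $3.1$. The implications $(1)\Rightarrow(2)$ and $(2)\Rightarrow(3)$ are formal: if $(1)$ holds then the conclusion of $(2)$ is true for every monomial ideal $I$, so $(2)$ holds with an empty hypothesis; and $(3)$ is merely the restriction of $(2)$ to those pairs $(I,J)$ in which $J$ is generated by a regular sequence on $S/I$, so $(2)\Rightarrow(3)$.

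The content is in $(3)\Rightarrow(1)$. Let $I\subsetneq S=K[x_1,\ldots,x_n]$ be a nonzero monomial ideal (the cases $I=0$ and $I=S$ being trivial). For an integer $k\geq 1$ put $S_k:=S[y_1,\ldots,y_k]$ and work with the ideal $IS_k\subset S_k$ together with $J:=(y_1,\ldots,y_k)\subset S_k$. Since $S_k/IS_k=(S/I)[y_1,\ldots,y_k]$ is a polynomial extension of $S/I$, the variables $y_1,\ldots,y_k$ form a regular sequence on $S_k/IS_k$, so $(3)$ applies to the pair $(IS_k,J)$ with $m=k$. I would next record two numerical facts, both instances of results proved earlier. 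By Remark $3.2$ read in $S_k$ (equivalently by Theorem $1.2(6)$), together with $\depth_{S_k}(IS_k)=\depth_S(I)+k$, one gets
\[ \depth_{S_k}(IS_k\cap J)=\depth_{S_k}(IS_k)-k+1=\depth_S(I)+1, \]
a value independent of $k$; while Corollary $2.4(1)$, read in $S_k$ with $m=k=|G(J)|$, together with $\sdepth_{S_k}(IS_k)=\sdepth_S(I)+k$, gives
\[ \sdepth_{S_k}(IS_k\cap J)\geq \sdepth_{S_k}(IS_k)-\left\lfloor k/2\right\rfloor=\sdepth_S(I)+\left\lceil k/2\right\rceil, \]
which grows without bound. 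Hence, for $k$ large enough that $\sdepth_S(I)+\lceil k/2\rceil\geq\depth_S(I)+1$, we have $\sdepth_{S_k}(IS_k\cap J)\geq\depth_{S_k}(IS_k\cap J)$, i.e. the hypothesis of $(3)$ is satisfied. Therefore $(3)$ yields $\sdepth_{S_k}(IS_k)\geq\depth_{S_k}(IS_k)$, that is $\sdepth_S(I)+k\geq\depth_S(I)+k$, i.e. $\sdepth_S(I)\geq\depth_S(I)$. Since $I$ was arbitrary, this is $(1)$.

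I do not expect a real obstacle here. The only point worth isolating is the choice of configuration: passing from $I$ to $IS_k$ and intersecting with $J=(y_1,\ldots,y_k)$ produces a module whose Stanley-depth lower bound (growing like $\lceil k/2\rceil$, via Corollary $2.4(1)$) eventually overtakes its depth (pinned at $\depth_S(I)+1$), which makes the hypothesis of $(3)$ automatically true for large $k$; then $(3)$ carries the inequality back to $I$ itself once the $k$ new variables are discarded. Note that, unlike the proof of Proposition $3.1$, one does not seem to need a minimality-of-$k$ contradiction argument, since $(3)$ is phrased with inequalities rather than equalities, so a single application of $(3)$ with all $k$ new variables at once suffices. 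The routine verifications are that $y_1,\ldots,y_k$ really is regular on $S_k/IS_k$ (clear, as this ring is a polynomial extension of $S/I$) and that the depth and Stanley-depth identities invoked are exactly those already in the paper: Theorem $1.2(6)$ / Remark $3.2$, Corollary $2.4(1)$, and the standard equalities $\depth_{S_k}(IS_k)=\depth_S(I)+k$, $\sdepth_{S_k}(IS_k)=\sdepth_S(I)+k$.
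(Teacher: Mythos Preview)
Your proof is correct and follows essentially the same approach as the paper's: both pass to $S_k=S[y_1,\ldots,y_k]$, take $J=(y_1,\ldots,y_k)$ (regular on $S_k/IS_k$), use Corollary~2.4(1) to lower-bound $\sdepth_{S_k}(IS_k\cap J)$ by $\sdepth_S(I)+\lceil k/2\rceil$, and then invoke (3) once $k$ is large enough. Your write-up is in fact cleaner---you give the direct argument rather than the paper's unnecessary contradiction framing, and you compute $\depth_{S_k}(IS_k\cap J)=\depth_S(I)+1$ correctly (the paper's proof has a minor off-by-one slip and cites (2) where (3) is meant).
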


\begin{proof}
$(1)\Rightarrow(2)$ and $(2)\Rightarrow(3)$. There is nothing to prove. 

$(3)\Rightarrow(1)$. Let $I\subset S$ be a monomial ideal. Assume by contradiction that $\sdepth_S(I)<\depth_S(I)$. 
For any integer $k\geq 1$, we define $I_k:=(I,y_1,\ldots,y_k)\subset S_k:=S[y_1,\ldots,y_k]$. Denote $J=(y_1,\ldots,y_k)\subset S_k$. Note that $y_1,\ldots,y_k$ is a regular sequence on $S_k/IS_k$. By Corollary $2.4(1)$, we have $\sdepth_{S_k}(I_k)\geq \sdepth_S(I)+\left\lceil k/2 \right\rceil$. On the other hand, by Corollary $1.4(5)$, $\depth_{S_k}(I_k) = \depth_S(I)$. It follows that there
exists a $k_0\geq 1$, such that $\sdepth_{S_k}(I_k)\geq \depth_{S_k}(I_k)$ for any $k\geq k_0$, and therefore, by (2), we get $\sdepth_S(I)\geq \depth_S(I)$, as required.
\end{proof}

\begin{prop}
The following assertions are equivalent:

(1) For any integer $n\geq 1$ and any monomial ideal $I\subset S=K[x_1,\ldots,x_n]$, Stanley conjecture holds for $S/I$, i.e. $\sdepth_S(S/I)\geq \depth_S(S/I)$.

(2) For any integer $n\geq 1$ and any monomial ideals $I,J\subset S$, if $\sdepth_S(S/(I\cap J))\geq \depth_S(S/(I\cap J))$ then $\sdepth_S(S/I)\geq \depth_S(S/I)$.

(3) For any integers $n,m\geq 1$, any monomial ideal $I\subset S=K[x_1,\ldots,x_n]$, if $u_1,\ldots,u_m\in S$ is a regular sequence on $S/I$ and $J=(u_1,\ldots,u_m)$, then:
$$\sdepth_S(S/(I\cap J))\geq \depth_S(S/(I\cap J)) \Rightarrow \sdepth_S(S/I)\geq \depth_S(S/I).$$
\end{prop}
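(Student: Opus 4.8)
The plan is to establish the cycle $(1)\Rightarrow(2)\Rightarrow(3)\Rightarrow(1)$, in parallel with the proof of the previous proposition. The implications $(1)\Rightarrow(2)$ and $(2)\Rightarrow(3)$ require nothing: the former because $(1)$ already asserts $\sdepth_S(S/I)\geq\depth_S(S/I)$ for every monomial ideal, the latter because $(3)$ is the special case of $(2)$ in which $J$ is generated by a regular sequence on $S/I$. For $(3)\Rightarrow(1)$ I would argue by contradiction. Suppose $\sdepth_S(S/I)<\depth_S(S/I)$ for some monomial ideal $I\subset S=K[x_1,\dots,x_n]$; then $I$ is proper and nonzero (otherwise $\sdepth_S(S/I)=\depth_S(S/I)$), so $\depth_S(S/I)\leq\dim(S/I)\leq n-1$. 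For an integer $k\geq 1$ I would set $S_k:=S[y_1,\dots,y_k]$, $J_k:=(y_1,\dots,y_k)\subset S_k$, and $M_k:=S_k/(IS_k\cap J_k)$; since $S_k/IS_k\cong (S/I)[y_1,\dots,y_k]$, the sequence $y_1,\dots,y_k$ is regular on $S_k/IS_k$, and $J_k$ is a complete intersection generated by it.

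Next I would record two facts about $M_k$. Since $IS_k$ and $J_k$ have disjoint supports, Corollary $1.4(5)$ (equivalently Theorem $1.2(5)$) gives $\depth_{S_k}(M_k)=\depth_{S_k}(S_k/(IS_k+J_k))+1=\depth_S(S/I)+1$, a quantity independent of $k$. Applying the second inequality of Corollary $2.4(4)$ to $IS_k$ and $J_k$ inside $S_k$ (which has $n+k$ variables, with $|G(J_k)|=k$) gives
\[ \sdepth_{S_k}(M_k)\ \geq\ \min\bigl\{\,n,\ \sdepth_S(S/I)+\lceil k/2\rceil\,\bigr\}. \]
Choosing $k$ large enough that $\sdepth_S(S/I)+\lceil k/2\rceil\geq n$, I obtain $\sdepth_{S_k}(M_k)\geq n\geq\depth_S(S/I)+1=\depth_{S_k}(M_k)$, i.e. Stanley's conjecture holds for $M_k=S_k/(IS_k\cap J_k)$.

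I would then invoke hypothesis $(3)$ with the monomial ideal $IS_k\subset S_k$, the regular sequence $y_1,\dots,y_k$ on $S_k/IS_k$, and $J=J_k$: it forces $\sdepth_{S_k}(S_k/IS_k)\geq\depth_{S_k}(S_k/IS_k)$. Because adjoining the variables $y_1,\dots,y_k$ raises both invariants by exactly $k$ --- that is, $\sdepth_{S_k}(S_k/IS_k)=\sdepth_S(S/I)+k$ and $\depth_{S_k}(S_k/IS_k)=\depth_S(S/I)+k$ --- this reads $\sdepth_S(S/I)\geq\depth_S(S/I)$, contradicting the assumption. Hence $(1)$ holds.

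The delicate point is the choice of lower bound for $\sdepth_{S_k}(M_k)$. The first inequality in Corollary $2.4(4)$ only yields $\sdepth_{S_k}(M_k)\geq\min\{\sdepth_S(S/I)+k,\ \sdepth_S(I)\}$, which stabilizes at the constant $\sdepth_S(I)$ and would suffice only if Stanley's conjecture for $I$ were already available. Using instead the bound involving $n-m$ makes the estimate grow to the ambient dimension $n$, and the inequality $\depth_S(S/I)\leq n-1$, valid for any proper nonzero monomial ideal, then lets it dominate the constant $\depth_{S_k}(M_k)$. Everything else is a routine transcription of the argument for the previous proposition.
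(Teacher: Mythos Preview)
Your proof is correct and follows essentially the same route as the paper: adjoin new variables $y_1,\dots,y_k$, take $J_k=(y_1,\dots,y_k)$, use the second inequality of Corollary~2.4(4) to bound $\sdepth_{S_k}(S_k/(IS_k\cap J_k))$ from below by $\min\{n,\sdepth_S(S/I)+\lceil k/2\rceil\}$, compare with the constant depth, and invoke~(3). In fact you are more careful than the paper on two points: you correctly record $\depth_{S_k}(S_k/(IS_k\cap J_k))=\depth_S(S/I)+1$ (the paper's proof, via a typo, drops the $+1$), and you explicitly justify $\depth_S(S/I)\leq n-1$ so that the lower bound $n$ really dominates.
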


\begin{proof}
$(1)\Rightarrow(2)$ and $(2)\Rightarrow(3)$. There is nothing to prove.

$(3)\Rightarrow(1)$. Let $I\subset S$ be a monomial ideal. Assume by contradiction that $\sdepth_S(I)<\depth_S(I)$. 
For any integer $k\geq 1$, we define $I_k:=(I,y_1,\ldots,y_k)\subset S_k:=S[y_1,\ldots,y_k]$. Note that $y_1,\ldots,y_k$ is a regular sequence on $S_k/IS_k$. By Corollary $2.4(4)$, $\sdepth_{S_k}(S_k/I_k)\geq 
\min\{n, \sdepth_S(S/I)+\left\lceil k/2 \right\rceil \}$. On the other hand, by Corollary $1.4(5)$, $\depth_{S_k}(S_j/I_k) = \depth_S(S/I)$. It follows that there
exists a $k_0\geq 1$, such that $\sdepth_{S_k}(I_k)\geq \depth_{S_k}(I_k)$ for any $k\geq k_0$, and therefore, by (2), we get $\sdepth_S(I)\geq \depth_S(I)$, as required.
\end{proof}

\begin{obs}
\emph{Let $I\subset S=K[x_1,\ldots,x_n]$ be a monomial ideal such that $\sdepth_S(S/I)\geq \depth_S(S/I)$. Let $u_1,\ldots,u_m\in S$ be a regular sequence on $S/I$ and $J=(u_1,\ldots,u_m)$. Note that $\depth_S(S/(I\cap J)) = \depth_S(S/(I+J))+1 = \depth_S(S/I)-m+1$. Also, by Corollary $2.4(4)$, we have $\sdepth_S(S/(I\cap J)) \geq \min\{ n-m, \sdepth_S(S/I) - \left\lfloor m/2 \right\rfloor \}$ 
Assume $\sdepth_S(S/(I\cap J)) = \depth_S(S/(I\cap J))$.} 

\emph{It follows that $ \depth_S(S/I)-m+1 \geq \min\{ n-m, \sdepth_S(S/I) - \left\lfloor m/2 \right\rfloor \} \geq \linebreak \min\{ n-m, \depth_S(S/I) - \left\lfloor m/2 \right\rfloor \} \geq \min\{n-m, \depth_S(S/I)-m+1\} = \depth_S(S/I)-m+1$ and therefore, we have equalities.}

\emph{If $I$ is principal, then $\depth_S(S/I)=n-1$ and therefore $\min\{ n-m, \depth_S(S/I) - \left\lfloor m/2 \right\rfloor \} = n-m$. It follows that $\depth_S(S/I) - \left\lfloor m/2 \right\rfloor = n-1-\left\lfloor m/2 \right\rfloor  \geq n-m$ which is true for all $m$. If $I$ is not principal, then by Remark $2.10$, $\depth_S(S/I)\leq n-2$. It follows that $\min\{ n-m, \sdepth_S(S/I) - \left\lfloor m/2 \right\rfloor \} = \sdepth_S(S/I) - \left\lfloor m/2 \right\rfloor = \depth_S(S/I)-m+1$. Therefore, $\sdepth_S(S/I)=\depth_S(S/I)$ and $m\leq 2$.}

\emph{In particular, if we could find an ideal $I\subset S$ which is not principal, such that, denoting $\bar S=S[y_1,y_2,y_3]$, if $\sdepth_{\bar S}(\bar S/(I \bar S\cap (y_1,y_2,y_3)))=\depth_S(I)$, we contradict the Stanley conjecture for $S/I$.}
\end{obs}

\begin{lema}
Let $I\subset J \subset S=K[x_1,\ldots,x_n]$ be two monomial ideals and denote $\bar S:=S[y]$. Then:
\[ \sdepth_S(J/I)+1 \geq \sdepth_{\bar S}((J\bar S+(y))/I\bar S) \geq \min\{\sdepth_S(J/I), \sdepth_S(S/I)+1 \}. \]
\end{lema}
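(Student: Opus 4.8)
The plan is to study the $\bar S$-module $N:=(J\bar S+(y))/I\bar S$ directly, using the decomposition of $\bar S=S[y]$ by the degree in $y$. First I would record the structure of $N$. Since an element of $J\bar S+(y)$ is a polynomial $\sum_{k\ge 0}f_ky^k$ with $f_0\in J$ and $f_k\in S$ arbitrary for $k\ge 1$, as a $\mathbb Z^{n+1}$-graded $K$-vector space we have $N=(J/I)\oplus\bigoplus_{k\ge1}(S/I)y^k$, with $J/I$ placed in $y$-degree $0$. On the level of $\bar S$-modules I would instead use the short exact sequence
\[ 0\longrightarrow \bar S/I\bar S \xrightarrow{\ \cdot y\ } N \longrightarrow J/I\longrightarrow 0, \]
where the first map identifies $\bar S/I\bar S$ (up to an irrelevant shift in the $y$-degree) with the submodule $N':=(I\bar S+(y))/I\bar S$ of $N$; this uses the elementary identity $y\bar S\cap I\bar S=yI\bar S$. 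The quotient map is induced by $\sum_{k\ge0}f_ky^k\mapsto f_0+I$, whose kernel is exactly $I\bar S+(y)$. Since $y$ annihilates $N/N'\cong J/I$, this quotient is really the $S$-module $J/I$; in particular no Stanley set of a Stanley decomposition of it over $\bar S$ can contain $y$, so $\sdepth_{\bar S}(N/N')=\sdepth_S(J/I)$.

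For the lower bound I would apply to the displayed short exact sequence the inequality $\sdepth(B)\ge\min\{\sdepth(A),\sdepth(C)\}$, valid for any short exact sequence $0\to A\to B\to C\to 0$ of finitely generated multigraded modules — the same tool already invoked in Remark $1.3$. Together with $\sdepth_{\bar S}(\bar S/I\bar S)=\sdepth_S(S/I)+1$ (polynomial extension, \cite{hvz}) and the equality $\sdepth_{\bar S}(N/N')=\sdepth_S(J/I)$ from the previous paragraph, this gives $\sdepth_{\bar S}(N)\ge\min\{\sdepth_S(J/I),\,\sdepth_S(S/I)+1\}$.

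For the upper bound I would start from a Stanley decomposition $\mathcal D:\ N=\bigoplus_i m_iK[Z_i]$ with $\sdepth(\mathcal D)=\sdepth_{\bar S}(N)$ and extract its component in $y$-degree $0$, which by the vector-space description above equals $J/I$. Writing $c_i$ for the degree of $y$ in $m_i$: a summand with $c_i>0$ lies entirely in positive $y$-degree and contributes nothing; a summand with $c_i=0$ and $y\notin Z_i$ lies in $y$-degree $0$ and is kept unchanged; a summand with $c_i=0$ and $y\in Z_i$ contributes its $y$-degree-$0$ part $m_iK[Z_i\setminus\{y\}]$. Since taking a graded component commutes with direct sums, assembling these pieces yields a Stanley decomposition of $J/I$ over $S$ whose Stanley sets are $Z_i$ or $Z_i\setminus\{y\}$, all contained in $\{x_1,\dots,x_n\}$ and each of cardinality at least $\sdepth(\mathcal D)-1$. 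Hence $\sdepth_S(J/I)\ge\sdepth_{\bar S}(N)-1$, i.e. $\sdepth_{\bar S}(N)\le\sdepth_S(J/I)+1$.

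The routine but delicate part is the bookkeeping: verifying that the displayed short exact sequence is well defined with the stated kernel (the identity $y\bar S\cap I\bar S=yI\bar S$ and the description of $J\bar S+(y)$ by $y$-degrees), and checking that the collection of $y$-degree-$0$ pieces of $\mathcal D$ is genuinely a direct sum decomposition of $J/I$ into free modules over subsets of $\{x_1,\dots,x_n\}$. Everything else rests on the single inequality $\sdepth(B)\ge\min\{\sdepth(A),\sdepth(C)\}$ for short exact sequences, which is used elsewhere in the paper and may be cited; if one wishes to avoid it, the lower bound can be obtained directly by lifting to $N$ a Stanley decomposition of $\bar S/I\bar S$ together with a lift of a Stanley decomposition of $J/I$, which is precisely the proof of that inequality in this concrete situation.
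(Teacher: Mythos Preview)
Your proof is correct and follows essentially the same route as the paper. For the upper bound you and the paper both project a Stanley decomposition of $N$ to its $y$-degree $0$ part and observe that each surviving Stanley set loses at most the variable $y$. For the lower bound the paper writes the explicit $K$-vector space splitting $N=J/I\oplus y(S/I)[y]$ and reads off the inequality, which is exactly the concrete instance of the short exact sequence inequality you invoke (and which you yourself point out at the end); so the two arguments coincide.
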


\begin{proof}
In order to prove the first inequality, we consider $\bigoplus_{i=1}^r u_iK[Z_i]$, a Stanley decomposition of $(J\bar S+(y))/I\bar S$. Note that $((J\bar S+(y))/I\bar S)\cap S = J/I$ and therefore, $J/I=\bigoplus_{y\nmid u_i} u_iK[Z_i\setminus \{y\}]$ is a Stanley decomposition. 

The second inequality follows from the fact that $(J\bar S+(y))/I\bar S = J/I\oplus y(S/I)[y]$.
\end{proof}

As a particular case of Example $1.10$, we consider the following Lemma.

\begin{lema}
Let $J=(x_1,\ldots,x_n)\cap(y_1,\ldots,y_m)\subset S'=K[x_1,\ldots,x_n,y_1,\ldots,y_m]$ with $n\geq m$. Then:

(1) $m \geq \sdepth_{S'}(S'/J) \geq \min\{m, \left\lceil  n/2 \right\rceil \}$.

(2) $\depth_{S'}(S'/J) = 1$.

In particular, if $n\geq 2m-1$, then $\sdepth_{S'}(S'/J)=m$.
\end{lema}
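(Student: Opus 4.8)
The plan is to recognize $J$ as an intersection of two monomial ideals with disjoint supports and then to feed this situation into Theorem $1.2$. Write $A:=K[x_1,\ldots,x_n]$, $B:=K[y_1,\ldots,y_m]$, $P:=(x_1,\ldots,x_n)\subset A$ and $Q:=(y_1,\ldots,y_m)\subset B$, so that $J=PS'\cap QS'$ with $\supp(P)\cap\supp(Q)=\emptyset$. First I would record the data needed: $A/P\cong K\cong B/Q$, hence $\depth_A(A/P)=\depth_B(B/Q)=0$ and $\sdepth_B(B/Q)=0$; the ideal $P$ is a monomial complete intersection on $n$ generators, hence $\sdepth_A(P)=n-\lfloor n/2\rfloor=\lceil n/2\rceil$ by \cite[Theorem $2.4$]{shen}; and $S'/PS'\cong K[y_1,\ldots,y_m]$ is a polynomial ring in $m$ variables, so $\sdepth_{S'}(S'/PS')=m$ (the trivial decomposition gives $\geq m$, and $\sdepth\leq\dim$ gives $\leq m$).

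For part $(1)$ I would apply Theorem $1.2(3)$ to the ideals $P\subset A$ and $Q\subset B$, the role of the big ring being played by $S'$. Its left-hand inequality yields $m=\sdepth_{S'}(S'/PS')\geq\sdepth_{S'}(S'/(PS'\cap QS'))=\sdepth_{S'}(S'/J)$, which is the upper bound. Its right-hand inequality yields $\sdepth_{S'}(S'/J)\geq\min\{\sdepth_{S'}(S'/PS'),\ \sdepth_B(B/Q)+\sdepth_A(P)\}=\min\{m,\ \lceil n/2\rceil\}$, which is the lower bound. For part $(2)$ I would apply Theorem $1.2(5)$ to the same ideals: $\depth_{S'}(S'/(PS'\cap QS'))-1=\depth_A(A/P)+\depth_B(B/Q)=0$, hence $\depth_{S'}(S'/J)=1$ (using $m\geq 1$ so this is nonnegative, consistent with $J\neq\{0\}$).

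Finally, for the last assertion: if $n\geq 2m-1$ then $\lceil n/2\rceil\geq\lceil(2m-1)/2\rceil=m$, so the lower bound in $(1)$ becomes $\min\{m,\lceil n/2\rceil\}=m$, which together with the upper bound $\sdepth_{S'}(S'/J)\leq m$ forces $\sdepth_{S'}(S'/J)=m$. There is no genuine obstacle here; the only point requiring a little care is matching the asymmetric roles of $I$ and $J$ in Theorem $1.2(3)$ correctly, together with the bookkeeping $\sdepth_A(P)=\lceil n/2\rceil$ and $\sdepth_{S'}(S'/PS')=m$, so that the minimum comes out exactly as $\min\{m,\lceil n/2\rceil\}$ and not as something weaker.
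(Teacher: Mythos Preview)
Your proof is correct and follows essentially the same route as the paper: the paper simply states that Lemma~3.7 is a particular case of Example~1.10, which in turn is obtained from Corollary~1.7 and ultimately from Theorem~1.2(3) and~1.2(5); you apply Theorem~1.2(3) and~1.2(5) directly with the same identifications $I=P$, $J=Q$, arriving at the identical bounds.
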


\begin{prop}
The following assertions are equivalent:

(1) For any integer $n\geq 1$ and any monomial ideal $I\subset S=K[x_1,\ldots,x_n]$, Stanley conjecture holds for $S/I$ and $I$.

(2) For any integer $n\geq 1$ and any monomial ideals $I,J\subset S$ with $\supp(I)\cap\supp(J)=\emptyset$, we have:
If $\sdepth_S((I+J)/I) = \depth_S((I+J)/I)$, then $\sdepth_S(S/I) = \depth_S(S/I)$ and $\sdepth_S(J) \geq \depth_S(J)$.
\end{prop}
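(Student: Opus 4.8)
The plan is to treat the two implications separately: $(1)\Rightarrow(2)$ is a short collapse of already-established inequalities, while $(2)\Rightarrow(1)$ is a ``fresh variable'' boosting argument in the spirit of Propositions $3.1$, $3.3$, $3.4$, driven by Lemmas $3.6$ and $3.7$.

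For $(1)\Rightarrow(2)$: let $I,J$ have disjoint supports and assume $\sdepth_S((I+J)/I)=\depth_S((I+J)/I)$. After relabelling variables as in Theorem $1.2$ (the variables outside $\supp(I)\cup\supp(J)$ coming along harmlessly), Theorem $1.2(5)$--$(6)$ yield the identity $\depth_S((I+J)/I)=\depth_S(S/I)+\depth_S(J)-n$, while Remark $1.3$ gives $\sdepth_S((I+J)/I)\geq \sdepth_S(J)+\sdepth_S(S/I)-n$. Since (1) provides Stanley's conjecture for $J$ and for $S/I$, the chain
\[ \depth_S((I+J)/I)=\sdepth_S((I+J)/I)\geq \sdepth_S(J)+\sdepth_S(S/I)-n\geq \depth_S(J)+\depth_S(S/I)-n=\depth_S((I+J)/I) \]
consists of equalities; as $\sdepth_S(J)\geq\depth_S(J)$, $\sdepth_S(S/I)\geq\depth_S(S/I)$ and the two sums coincide, both inequalities are equalities, which is precisely the conclusion of (2).

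For $(2)\Rightarrow(1)$: assume (2) and suppose, for contradiction, that Stanley's conjecture fails for some monomial ideal $A\subset S=K[x_1,\ldots,x_n]$. If $\sdepth_S(S/A)<\depth_S(S/A)$, I would adjoin fresh variables, set $S_k:=S[y_1,\ldots,y_k]$, and apply (2) to the disjoint-support pair $AS_k,\ (y_1,\ldots,y_k)$. Here $(AS_k+(y_1,\ldots,y_k))/AS_k\cong (S/A)\otimes_K(y_1,\ldots,y_k)$ has depth $\depth_S(S/A)+1$ for every $k\geq1$, and, by Remark $1.3$ together with $\sdepth$ of the maximal ideal being $\geq\lceil k/2\rceil$, its Stanley depth $s_k$ satisfies $s_k\geq\sdepth_S(S/A)+\lceil k/2\rceil\to\infty$. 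Applying Lemma $3.6$ to the inclusion $AS_{k-1}\subset(A,y_1,\ldots,y_{k-1})$ with adjoined variable $y_k$ shows $s_{k-1}\leq s_k\leq s_{k-1}+1$. Since $s_1=\sdepth_S(S/A)+1<\depth_S(S/A)+1$, the first index $k_0$ with $s_{k_0}\geq\depth_S(S/A)+1$ must satisfy $s_{k_0}=\depth_S(S/A)+1=\depth_{S_{k_0}}\big((AS_{k_0}+(y_1,\ldots,y_{k_0}))/AS_{k_0}\big)$; thus the hypothesis of (2) holds at level $k_0$ and forces $\sdepth_{S_{k_0}}(S_{k_0}/AS_{k_0})=\depth_{S_{k_0}}(S_{k_0}/AS_{k_0})$, i.e.\ $\sdepth_S(S/A)=\depth_S(S/A)$, a contradiction. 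If instead $\sdepth_S(A)<\depth_S(A)$, I would exploit the second conclusion of (2), so $J$ must be an \emph{extension} of $A$: using the ideals $L_{p,q}:=(z_1,\ldots,z_p)\cap(w_1,\ldots,w_q)$ of Lemma $3.7$ (fresh variables, $p\geq q\geq1$, $p\geq2q-1$, so $K[z_1,\ldots,z_p,w_1,\ldots,w_q]/L_{p,q}$ has depth $1$ and Stanley depth $q$), the module $(L_{p,q}R+AR)/L_{p,q}R\cong A\otimes_K(K[z_1,\ldots,z_p,w_1,\ldots,w_q]/L_{p,q})$ has depth $\depth_S(A)+1$ and Stanley depth $\geq\sdepth_S(A)+q$. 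Starting from $A\otimes_K K[z_1]\cong A[z_1]$, whose Stanley depth is \emph{exactly} $\sdepth_S(A)+1<\depth_S(A)+1$, and enlarging the fresh ring one variable at a time within this family, the ``decreases by at most one under specialization of a variable'' estimate (the argument behind the first inequality of Lemma $3.6$) again prevents the Stanley depth from overshooting $\depth_S(A)+1$; hence it meets the depth at some step, where (2) gives $\sdepth_R(AR)\geq\depth_R(AR)$, i.e.\ $\sdepth_S(A)\geq\depth_S(A)$, a contradiction.

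I expect the main obstacle to be forcing the hypothesis of (2) to hold with \emph{equality} at the threshold, rather than with a mere inequality: this is exactly why the boosting path must start at a module whose Stanley depth is \emph{exactly} one above $\sdepth_S(S/A)$ (resp.\ $\sdepth_S(A)$), why the depth must be kept constant along the path, and why one needs the one-step ``$\leq 1$'' control on Stanley depth. Checking carefully that each single-variable specialization recovers the previous module within the chosen family (so that the restriction argument of Lemma $3.6$ genuinely applies, both for $(S/A)\otimes_K(y_1,\ldots,y_k)$ and for $A\otimes_K(K[z_1,\ldots,z_p,w_1,\ldots,w_q]/L_{p,q})$) is the technical heart of the second implication.
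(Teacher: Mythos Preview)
Your argument tracks the paper's proof closely. The $(1)\Rightarrow(2)$ direction and the $S/A$ half of $(2)\Rightarrow(1)$ are essentially identical to what the paper does: the same depth formula from Theorem~1.2(6), the same lower bound from Remark~1.3, and the same use of Lemma~3.6 to get the one-step control $s_k\le s_{k-1}+1$ and pin down the minimal $k_0$ where equality occurs. (Your depth value $\depth_S(S/A)+1$ is in fact the correct one; the paper writes $\depth_S(S/I)$.)

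For the $A$ half of $(2)\Rightarrow(1)$ you and the paper use the same auxiliary ideals $L_{p,q}$ of Lemma~3.7. The paper takes $J_k=L_{2k-1,k}$, obtains $\sdepth_{S_k}(I_k/J_k)\ge\depth_{S_k}(I_k/J_k)$ for large $k$, and then invokes (2) directly; you are right to flag that the hypothesis of (2) is an \emph{equality}, so this last step needs the kind of threshold argument you outline. Your one-variable-at-a-time refinement is the natural fix, but as written it is only a sketch. You must actually specify a path of modules $M_t=(L_tR_t+AR_t)/L_tR_t$---for instance $A[z_1]\to M_{1,1}\to M_{2,1}\to M_{3,1}\to M_{3,2}\to\cdots$---and then verify two things at each transition: first, that intersecting $M_{t+1}$ with the smaller ring genuinely recovers $M_t$ (so that the restriction mechanism behind Lemma~3.6 gives $\sdepth(M_{t+1})\le\sdepth(M_t)+1$); second, that $\depth(M_t)=\depth_S(A)+1$ throughout, which forces $q\ge 1$ immediately after the starting module $A[z_1]$, since $A[z_1,\ldots,z_p]$ with $p\ge 2$ has depth $\depth_S(A)+p$. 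None of this is difficult, but ``the argument behind the first inequality of Lemma~3.6'' does not carry it automatically; in particular Lemma~3.6 itself is stated for $(J\bar S+(y))/I\bar S$ versus $J/I$, and your transitions are of a different shape.
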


\begin{proof}
$(1)\Rightarrow(2)$. Let $I,J\subset S$ be two monomial ideals, with $\supp(I)\cap\supp(J)=\emptyset$, and assume 
$\sdepth_S((I+J)/I) = \depth_S((I+J)/I)$. According to Theorem $1.2(6)$, we have $\depth_S((I+J)/I)=\depth_S(I+J)=\depth_S(S/I)+\depth_S(J)-n$. On the other hand, by Remark $1.3$, $\sdepth_S((I+J)/I)\geq \sdepth_S(S/I)+\sdepth_S(J)-n$. By (1), it follows that $\sdepth_S(S/I) = \depth_S(S/I)$ and $\sdepth_S(J) = \depth_S(J)$. In particular, $\sdepth_S(J) \geq \depth_S(J)$.

$(2)\Rightarrow(1)$. Let $I\subset S$ be a monomial ideal. For any positive integer $k$, we denote $S_k=S[y_1,\ldots,y_k]$ and $I_k=(I,y_1,\ldots,y_k)\subset S_k$. Assume $\sdepth_S(S/I)<\depth_S(S/I)$. Since $\sdepth_{S_k}(I_k/IS_k) \geq \sdepth_S(S/I)+ \left\lfloor k/2 \right\rfloor$, it follows that there exists a positive integer $k_0$ such that $\sdepth_{S_{k}}(I_{k}/IS_{k}) \geq \depth_{S_{k}}(I_{k}/IS_{k}) = \depth_S(S/I),\;\;(\forall)k\geq k_0 \;(*).$
If we apply Lemma $3.6$ for $I_k\subset S_k$ and $y_{k+1}$, we obtain $\sdepth_{S_{k+1}}(I_{k+1}/IS_{k+1}) \leq  \sdepth_{S_k}(I_k/IS_k)+1$. Thus, if we chose the minimal $k_0$ with the property $(*)$, we have in fact
$\sdepth_{S_{k_0}}(I_{k_0}/IS_{k_0}) = \depth_S(S/I)$. By $(2)$, it follows that $\sdepth_S(S/I)=\depth_S(S/I)$, a contradiction.

Now, assume $\sdepth_S(I)<\depth_S(I)$, and denote $J_k=(y_1,\ldots,y_{2k-1})\cap (y_{2k},\ldots,y_{3k-1})\subset S_k:=S[y_1,\ldots,y_{3k-1}]$. According to Lemma $3.7$, we have $\sdepth_{S_k}(S_k/J_k)=n+k$ and $\depth_{S_k}(S_k/J_k) =1$. Let $I_k:=IS_k+J_k$. By Remark $1.3$, $\sdepth_{S_k}(I_k/J_k) \geq \sdepth_{S}(I) + k$. On the other hand $\depth_{S_k}(I_k/J_k) = \depth_{S}(I) + \depth_{S_k}(S_k/J_k)-n = \depth_S(I) + 1$.

Therefore, there exists a positive integer $k_0$, such that $\sdepth_{S_k}(I_k/J_k) \geq \sdepth_{S_k}(I_k/J_k)$ for any $k\geq k_0$. It follows, by $(2)$, that $\sdepth_S(I)\geq \depth_S(I)$, a contradiction.
\end{proof}

\end{document}